\pdfoutput=1
\documentclass[11pt,reqno]{amsart}
\usepackage[T1]{fontenc}
\usepackage{lmodern,amssymb,mathtools,booktabs,microtype}
\usepackage[a4paper,margin=28mm,headsep=6mm]{geometry}
\usepackage{tikz}
\usetikzlibrary{arrows.meta,calc,decorations.pathreplacing}
\usepackage[hidelinks]{hyperref}
\hypersetup{pdftitle={Arc-disjoint Hamiltonian paths in Cayley digraphs with a normal difference subgroup},pdfauthor={SangHyun Park},pdfsubject={Exact extension of coset assignments and confinement of Hamiltonian cut spectra}}
\newtheorem{theorem}{Theorem}[section]
\newtheorem{proposition}[theorem]{Proposition}
\newtheorem{lemma}[theorem]{Lemma}
\newtheorem{corollary}[theorem]{Corollary}
\theoremstyle{definition}
\newtheorem{definition}[theorem]{Definition}
\theoremstyle{remark}
\newtheorem{remark}[theorem]{Remark}

\numberwithin{equation}{section}
\newcommand{\Z}{\mathbb Z}
\newcommand{\Zq}{\mathbb Z_q}
\newcommand{\Cay}{\operatorname{Cay}}
\newcommand{\cyc}{\operatorname{cyc}}
\newcommand{\dist}{\operatorname{dist}}
\newcommand{\ord}{\operatorname{ord}}
\newcommand{\len}{\mathcal L}
\newcommand{\F}{\mathcal F}
\allowdisplaybreaks[1]
\title[Hamiltonian paths and prescribed coset choices]{Arc-disjoint Hamiltonian paths in Cayley digraphs with a normal difference subgroup}
\author{SangHyun Park}
\address{Yonsei University, Republic of Korea}
\date{}
\subjclass[2020]{Primary 05C25; Secondary 05C20, 05C45}
\keywords{Cayley digraph, arc-disjoint Hamiltonian paths, prescribed coset assignments, cut spectrum, primitive lattice point}
\begin{document}
\begin{abstract}
Let $G=\langle a,b\rangle$ be a finite group with distinct nonidentity generators, and suppose that $H=\langle ba^{-1}\rangle$ is normal. We determine exactly which complementary generator choices on the nonterminal cosets of $H$ extend to two arc-disjoint Hamiltonian paths with both terminal vertices in $H$. The associated return residue must avoid an explicit set of at most six residues. When $[G:H]\ge3$, any two nonterminal cosets may be left free while all other choices are prescribed; two is best possible. Every such Cayley digraph has two arc-disjoint Hamiltonian paths; this includes all two-generator Cayley digraphs of finite abelian groups. The criterion follows from a complete distance formula for the Hamiltonian cut spectra of $\Cay(\Z_q;\{D,D+1\})$. Their unique nondegenerate exception comes from the odd central coefficient $(q-1)/2$. The geometric ingredient is a confinement theorem: two sector-filled integer sequences satisfying a length inequality, with overlapping ranges and distance at least two, have one sequence contained in a single gap of the other. Sector filling bounds alternation, and the length inequality excludes interlacing. A self-contained proof of the primitive-ray cycle profile follows the intersection geometry of Curran and Witte.
\end{abstract}
\maketitle

\section{Introduction}\label{sec:intro}
A Cayley digraph on $n$ vertices with two distinct nonidentity generators has $2n$ arcs. Two arc-disjoint Hamiltonian paths together cover all but two of these arcs. The distinction between path packing and cycle decomposition is essential: the directed torus $\vec C_3\square\vec C_4$ has two arc-disjoint Hamiltonian paths~\cite{DMM}, whereas it has no Hamiltonian cycle~\cite{TE}.

We study the choices that can be prescribed before the two paths are completed. For a finite group $G=\langle a,b\rangle$, we use the right Cayley convention, with arcs $x\to xa$ and $x\to xb$, and write
\[
 \eta=ba^{-1},\qquad H=\langle\eta\rangle.
\]
We call $H$ the difference subgroup of the specified generating pair. When $H$ is normal, the two paths can be constructed by choosing complementary generators on its nonterminal cosets and joining the resulting segments inside one terminal coset. Our results give both existence and an exact criterion for the prescribed choices.

\begin{theorem}\label{thm:existence}
Let $G=\langle a,b\rangle$ be a finite group, where $a,b$ are distinct nonidentity elements. If $\langle ba^{-1}\rangle$ is normal in $G$, then $\Cay(G;\{a,b\})$ has two arc-disjoint directed Hamiltonian paths.
\end{theorem}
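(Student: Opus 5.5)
The plan is to build the two paths coset by coset, using the structure that normality of $H=\langle\eta\rangle$ forces. Put $m=|H|$ and $q=[G:H]$. Since $b=\eta a\equiv a \pmod H$, the quotient $G/H$ is cyclic and generated by the image of $a$. So the cosets are $H,Ha,\dots,Ha^{q-1}$, with $a^q\in H$ and $a^{-1}\eta a=\eta^{s}$ for some unit $s$. For $x=\eta^j a^i$, both arcs out of $x$ land in $Ha^{i+1}$:
\[
 xa=\eta^j a^{i+1},\qquad xb=\eta^{j+s^{-i}}a^{i+1}\quad(\text{up to the fixed automorphism}).
\]
Using generator $a$ or generator $b$ on a whole coset therefore moves the $H$-coordinate by $0$ or by a fixed shift. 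Suppose one path uses generator $\epsilon_i\in\{a,b\}$ on all of $Ha^i$ and the other path uses the complementary generator. Each assignment then defines a permutation of $G$ whose $q$-th iterate, restricted to $H$, is a translation of $\Zq$-type data in the cyclic group $H$. Its cycle structure is governed by a single return residue, and the two complementary assignments produce two arc-disjoint spanning unions of cycles.

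Next I would fix $H$ as the terminal coset and examine what happens inside it. In $H$ each path may switch generators from vertex to vertex, which amounts to choosing which arcs out of $H$ the path uses. This is exactly a Hamiltonian cut problem in the circulant $\Cay(\Z_m;\{D,D+1\})$. Here $D$ is the accumulated shift from the nonterminal cosets, and the single cut arc separates the two path endpoints. The statement then follows from two facts. First, the relevant cut spectrum (the cut positions realizable by a Hamiltonian path of the circulant) is nonempty for a suitable $D$. Second, the two complementary assignments can simultaneously use admissible cuts on disjoint arc sets. The abstract's exact criterion (the return residue must avoid at most six explicit residues) packages this, so I would invoke that criterion once it is established.

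The counting step is then short in the generic case. Varying the assignments $\epsilon_1,\dots,\epsilon_{q-1}$ on the nonterminal cosets changes the return residue through $q$ or more distinct values once $q\ge 3$. Since at most six residues are forbidden, for large parameters one assignment must be admissible. The degenerate cases still need direct constructions: $q=1$ (so $G=H$ is cyclic and the digraph is a circulant with generators $a$ and $a\eta$), $q=2$, and very small $m$. In these cases I would either write the two paths explicitly or use the known two-path result for two-generator circulants and directed tori, for example the $\vec C_3\square\vec C_4$ type examples~\cite{DMM}.

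I expect the main obstacle to be the terminal-coset step, namely proving that the Hamiltonian cut spectrum of $\Cay(\Z_m;\{D,D+1\})$ misses only a bounded, explicit set of residues. This needs the confinement geometry (sector-filled sequences, with one confined to a gap of the other) and the primitive-ray cycle profile in the style of Curran--Witte. In this proof I would treat that spectrum description as a black box. The remaining care goes into checking that the few small-$q$ and small-$m$ configurations where every reachable residue is forbidden do not actually occur, or else handling them by hand.
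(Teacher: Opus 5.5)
\textbf{There is a genuine gap in your counting step, for indices $2$ through $6$.} Write, as the paper does, $q=|H|$ and $g=[G:H]$; these are the reverse of your letters. The return residue is $u_E=t+\sum_{j\in E}r^j$ modulo $q$. For $r=1$ the reachable residues are exactly $t,t+1,\dots,t+g-1$, and in general only $\min\{g,q\}$ distinct values are guaranteed.

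A pigeonhole against $|\F(q,B)|\le6$ therefore settles $g\ge7$, with a finite check that $\F(q,B)\ne\Zq$ for $q\le6$. For $2\le g\le6$ and $q$ large it gives nothing, because the size bound alone cannot rule out that every reachable residue is exceptional. These cases are not a few configurations to be handled by hand. They are infinite families, many of them nonabelian: a dihedral group generated by two reflections has $g=2$, $r=-1$, $t=0$. No result on directed tori or circulants covers those.

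\textbf{The missing ingredient is the arithmetic shape of the exceptional set, not its size.} By definition $\F(q,B)\subseteq\{-1,0,c\}\cup\{B-1,B,B+c\}$, with $c=(q-1)/2$ present only for odd $q$. A short analysis of the gcd conditions shows that this set never contains three consecutive residues (Lemma~\ref{lem:three}).

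For $g\ge3$, free two cosets $i,j$ and prescribe the others. The three candidate residues are $u_0$, $u_0+r^i$ and $u_0+r^i+r^j$. The relations $r^g=1$ and $(r-1)t=0$ give $(r-1)B=0$, so $r^i$ and $r^j$ are units fixing $B$. Now reduce modulo $q_0$, the part of $q$ composed of primes not dividing $B$. This turns these units into $1$ and keeps every exceptional residue exceptional. So the three residues become consecutive, and one of them succeeds (Lemma~\ref{lem:B-units}).

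Index two needs its own observation. With $E=\varnothing$ the two coefficients are $t$ and $-t-r-1$. The identity $r(t+1)=t+r$ gives $\gcd(q,t+1)=\gcd(q,t+r)$, which is the minimal cut of both spectra, hence a common cut.

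For $g=1$ the paper applies the spectral distance theorem to $t$ and $-t-1$. If one of them is the central coefficient, so is the other, and they share the endpoint cuts.

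With these pieces, your black-boxed criterion does yield the theorem. Without them, the argument is incomplete for $2\le g\le6$.
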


The criterion below strengthens this statement. Put $q=|H|\ge2$ and $g=[G:H]$. Since $G/H$ is cyclic, there are residues $r,t\in\Zq=\Z/q\Z$ such that
\begin{equation}\label{eq:intro-coordinates}
 a\eta a^{-1}=\eta^r,\qquad a^g=\eta^t,\qquad
 r^g=1,\quad (r-1)t=0\pmod q.
\end{equation}
Every element has the form $\eta^z a^j$, with $0\le z<q$ and $0\le j<g$. We designate $H$ as the terminal coset. For $E\subseteq\{1,\ldots,g-1\}$, the first color uses $b$ throughout $Ha^j$ when $j\in E$ and $a$ otherwise; the second color makes the complementary choices. Define
\begin{equation}\label{eq:intro-state}
 B=2t+\sum_{j=0}^{g-1}r^j,\qquad
 u_E=t+\sum_{j\in E}r^j\pmod q.
\end{equation}
For a residue $B$, let $\F(q,B)$ be the union of the following indicated sets:
\begin{align}
 &\{0,B-1\} &&\text{if }\gcd(q,B-1)>2,\label{eq:Fminus}\\
 &\{-1,B\} &&\text{if }\gcd(q,B+1)>2.\label{eq:Fplus}
\end{align}
If $q$ is odd, also include
\begin{equation}\label{eq:Fcentral}
 \{c,B+c\},\qquad c=(q-1)/2,
 \quad\text{if }\gcd(q,2B-1)>1\text{ and }\gcd(q,2B+1)>1.
\end{equation}
All entries are residues modulo $q$. In particular, $\F(2,B)$ is empty.

\begin{theorem}[Exact extension of coset choices]\label{thm:extension}
Assume $g\ge2$. The complementary choices specified by $E$ extend to two arc-disjoint Hamiltonian paths with both terminal vertices in $H$ if and only if
\[
 u_E\notin\F(q,B).
\]
There are at most four exceptional residues for even $q$ and at most six for odd $q$.

If $g\ge3$, choose any two distinct indices $i,j\in\{1,\ldots,g-1\}$ and prescribe the choices on all other nonterminal cosets. If $E_0$ denotes those prescribed to use $b$ in the first color, then one of
\[
 E_0,\qquad E_0\cup\{i\},\qquad E_0\cup\{i,j\}
\]
extends. A single free nonterminal coset does not suffice for this guarantee.
\end{theorem}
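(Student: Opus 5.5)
The plan is to reduce the extension problem on $G$ to a problem about two complementary cuts in the circulant digraph $\Cay(\Zq;\{D,D+1\})$, and then to read off the criterion from the distance formula for Hamiltonian cut spectra.

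\textbf{Step 1: coordinates and the return map.} I work in the coordinates $\eta^z a^j$. Since $\eta^z a^j\cdot a=\eta^z a^{j+1}$ and $\eta^z a^j\cdot b=\eta^{z+r^j}a^{j+1}$ by~\eqref{eq:intro-coordinates}, using $b$ uniformly on the coset $Ha^j$ translates the $z$-coordinate by $r^j$, while using $a$ leaves it unchanged. Crossing from $Ha^{g-1}$ back into $H$ contributes the extra shift $t$, because $a^g=\eta^t$. Hence the first color, followed from a vertex of $H$ once around all nonterminal cosets, induces on $H\cong\Zq$ the map $z\mapsto z+\varepsilon(z)+u_E$. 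Here $\varepsilon(z)\in\{0,1\}$ records whether $a$ or $b$ is used at $\eta^z\in H$. The second color uses the complementary choices, so it induces $z\mapsto z+(1-\varepsilon(z))+(B-1-u_E)$; the two shifts add to $B$ because $\sum_j r^j=B-2t$.

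\textbf{Step 2: reformulation as a cut problem.} Both colors are Hamiltonian paths with terminal vertices in $H$ exactly when each induced functional digraph on $\Zq$ is a single directed path after one arc is removed. Equivalently, each is a Hamiltonian cycle of the appropriate circulant with one arc deleted at the terminal vertex. The nonterminal cosets are then traversed automatically, because each segment through $Ha^1,\dots,Ha^{g-1}$ is a bijection.

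With $D=u_E$, the first color is a "cut" $\varepsilon$ in $\Cay(\Zq;\{D,D+1\})$. The complementary cut must simultaneously be Hamiltonian in $\Cay(\Zq;\{B-1-D,B-D\})$. This is a second copy of the same circulant family, reflected by $z\mapsto -z$ and translated.

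So the condition becomes: the Hamiltonian cut spectra of the two circulants, with parameters $D$ and $B-1-D$, must share a compatible cut. I then invoke the complete distance formula for these cut spectra, via the confinement theorem and the primitive-ray cycle profile. It should say that this fails exactly when $D$ lies in one of the pairs $\{0,B-1\}$ or $\{-1,B\}$ under the stated gcd conditions. The one remaining nondegenerate exception is the central pair $\{c,B+c\}$ for odd $q$. The symmetry $D\leftrightarrow B-1-D$ explains why $\F(q,B)$ is a union of pairs, and counting the pairs gives at most four exceptional residues for even $q$ and at most six for odd $q$.

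I expect the main obstacle to be this translation step. It requires matching "both complementary cuts are Hamiltonian with a common terminal" precisely to the distance condition in the spectrum formula, including the degenerate cases $\gcd(q,B\mp1)\le2$ and the case $q=2$.

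\textbf{Step 3: the two-free-coset statement and sharpness.} For $g\ge3$ the three candidate sets give the residues
\[
 u,\qquad u+r^i,\qquad u+r^i+r^j,
\]
where $r^i$ and $r^j$ are units. I will show that $\F(q,B)$ cannot contain all three.

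The argument is a case analysis on which pairs of $\F(q,B)$ occur. The elements $0,-1,B-1,B,c,B+c$ come in adjacent couples $\{-1,0\}$ and $\{B-1,B\}$, together with the central pair, which sits at distance about $q/2$ from these couples. A chain of two unit steps landing on three exceptional residues forces arithmetic relations between $B$, $c$ and the units. These relations contradict the gcd conditions, since $\gcd(q,2B\pm1)>1$ and $\gcd(q,B\pm1)>2$ cannot hold together with the required step relations.

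For sharpness with a single free coset, I will exhibit $r=1$ and $q$, $t$ chosen so that $\gcd(q,B+1)>2$ and $\gcd(q,B-1)>2$. Then both $u=-1$ and $u+1=0$ lie in $\F(q,B)$, so neither choice on the free coset extends.
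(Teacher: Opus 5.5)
Your route to the exact criterion is the paper's: Proposition~\ref{prop:fixed-reduction} followed by Proposition~\ref{prop:failure}. However, you leave the reduction unproved, and it should be stated as $\dist(Z_q(u_E),Z_q(u_E-B))\le1$, not as a shared cut. The necessity direction needs three things. Arc forcing (Lemma~\ref{lem:forcing}) makes the first color a cut $s$ on $H$. The incoming conflict at $s-1,s$ forces the second terminal to $h\in\{s-1,s\}$, with the boundary cases treated separately. The reflection $x=h-z$ then turns the second color into $R_{C,v}$ with $C=u_E-B$ and $|v-s|\le1$. The degenerate and central distance formulas of Theorem~\ref{thm:spectra} then produce $\F(q,B)$.

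The genuine gap is Step~3. You use only that $r^i,r^j$ are units, and with that information alone the claim is false. For $q=15$ and $B=4$ one has $\gcd(15,B-1)=3$, $\gcd(15,B+1)=5$ and $\gcd(15,2B-1)=1$, so $\F(15,4)=\{0,3,4,14\}$. Taking $u=14$ with the units $w=1$, $z=4$ gives the residues $14,0,4$, all exceptional. No case analysis on the gcd conditions can therefore close your argument.

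The missing ingredient is that the steps fix $B$. From $r^g=1$ and $(r-1)t=0$ one gets $(r-1)B=2(r-1)t+r^g-1=0$, so $r^iB=r^jB=B$. The paper uses this by projecting to $q_0=\prod_{p^e\parallel q,\ p\nmid B}p^e$:
\begin{itemize}
\item Every gcd in \eqref{eq:Fminus}--\eqref{eq:Fcentral} divides $q_0$, so exceptional residues modulo $q$ map to exceptional residues modulo $q_0$, the central one to $(q_0-1)/2$.
\item $B$ is a unit modulo $q_0$, hence $r^i\equiv r^j\equiv1$ there.
\end{itemize}
The three residues thus become consecutive modulo $q_0$, and the statement reduces to Lemma~\ref{lem:three}. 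That lemma still needs a real case analysis. Note also that the central pair need not be far from $\{B-1,B\}$: at $B=3/2$ the residues $c,B-1,B$ are consecutive, and it is the gcd conditions, not distance, that stop all three from being exceptional.

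Your sharpness plan is sound. For instance, $(q,g,r,t)=(12,9,1,4)$ gives $B=5$ and $\F(12,5)=\{0,4,5,11\}$. With $E_0=\varnothing$ and one free coset, the only possible residues are $4$ and $5$, both exceptional.
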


The prescriptions are made on entire cosets. The terminal vertices inside $H$ are chosen by the construction, and the two free cosets may change the number of cosets on which a given generator is used.

The arithmetic content of the criterion is a classification of Hamiltonian cuts. For $D\in\Zq$ and $1\le s\le q$, define the partial map
\begin{equation}\label{eq:cut}
 R_{D,s}(0)=\bot,\qquad
 R_{D,s}(z)=
 \begin{cases}
 z+D,&1\le z<s,\\
 z+D+1,&s\le z<q
 \end{cases}\pmod q.
\end{equation}
Its arcs form one directed path and possibly some disjoint directed cycles. Write
\[
 Z_q(D)=\{s\in\{1,\ldots,q\}:R_{D,s}\text{ is a Hamiltonian path}\}.
\]
For subsets of $\Z$, distance means the least absolute difference, measured on the integer line.

\begin{theorem}[Distances between Hamiltonian cut spectra]\label{thm:spectra}
Let $q\ge3$ and $A,C\in\Zq\setminus\{0,-1\}$. If $q$ is even, then
\[
 \dist(Z_q(A),Z_q(C))=
 \begin{cases}
 0,&A\equiv C\pmod2,\\
 1,&A\not\equiv C\pmod2.
 \end{cases}
\]
If $q$ is odd and neither coefficient is $c=(q-1)/2$, then the spectra intersect. For the central coefficient,
\begin{equation}\label{eq:central-distance}
 \dist(Z_q(c),Z_q(D))
 =\min\{\gcd(q,D),\gcd(q,D+1)\}-1.
\end{equation}
For every $D\in\Zq$, the degenerate distances are
\begin{equation}\label{eq:degenerate-distance}
 \dist(Z_q(0),Z_q(D))=\gcd(q,D+1)-1,\qquad
 \dist(Z_q(-1),Z_q(D))=\gcd(q,D)-1.
\end{equation}
\end{theorem}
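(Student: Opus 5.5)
The plan is to first obtain an explicit description of each spectrum $Z_q(D)$ and then compare spectra pairwise. The comparison uses extreme elements for the degenerate cases and the confinement theorem for the generic ones.

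\emph{Degenerate coefficients.} These are direct. For $D=0$ and $s\ge2$, every $z$ with $1\le z<s$ is a fixed point of $R_{0,s}$, so the map is not a Hamiltonian path. For $s=1$ the arcs form the path $1\to2\to\cdots\to q-1\to0$. Hence $Z_q(0)=\{1\}$, and symmetrically $Z_q(-1)=\{q\}$. Formula \eqref{eq:degenerate-distance} is therefore equivalent to two claims:
\[
\min Z_q(D)=\gcd(q,D+1),\qquad \max Z_q(D)=q+1-\gcd(q,D).
\]
I would prove the first claim directly. For $s\le\gcd(q,D+1)=:d$, the vertices $z\ge s$ all move by $D+1$, so they stay in their residue class modulo $d$. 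Any such class avoiding $\{0,\dots,s-1\}$ then closes into a cycle, and this forces $s\ge d$. For $s=d$ I would check directly that the classes are chained through the $D$-steps taken on $1,\dots,d-1$.

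For the second claim I would use the reflection $z\mapsto -z$ together with a shift. I expect this to conjugate $R_{D,s}$ to $R_{-1-D,\,q+1-s}$ up to relabelling of the path ends, so that $Z_q(-1-D)=q+1-Z_q(D)$. This reflection also interchanges the two degenerate cases and preserves the central coefficient, because $-1-c\equiv c$.

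\emph{Structure of the spectra.} Next I would invoke the primitive-ray cycle profile, proved in the paper after Curran and Witte. It says that the number of components of $R_{D,s}$ equals the gcd of the coordinates of a lattice point determined by $(s,D)$ lying on a ray. Consequently $R_{D,s}$ is Hamiltonian exactly when that point is primitive. From this I would read off that $Z_q(D)$, listed increasingly, is a sector-filled integer sequence in the sense of the confinement theorem, with length and range computable from $\gcd(q,D)$ and $\gcd(q,D+1)$.

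For even $q$ the profile gives a parity constraint. Closing the path with an arc into its initial vertex, the step count forces $s\equiv D+1\pmod 2$, or an analogous fixed parity. So spectra for coefficients of different parity are disjoint and have distance at least $1$; exhibiting adjacent elements shows the distance is exactly $1$. For equal parity I would show the spectra meet, using the confinement theorem exactly as in the odd case.

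\emph{Odd $q$ and the central coefficient.} Suppose $A,C\ne c$ and $Z_q(A)\cap Z_q(C)=\emptyset$. Their ranges overlap, since by the extreme-value formulas both contain small and large values. If their distance were at least $2$, the confinement theorem would put one spectrum inside a single gap of the other. For non-central coefficients the cycle profile shows that every gap is shorter than the range of the other spectrum, which is a contradiction. So the distance is at most $1$, and a separate parity-free argument, again via primitivity of adjacent lattice points, upgrades this to an intersection.

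For $c=(q-1)/2$ the ray has slope making $Z_q(c)$ very sparse: I expect it to consist of points near $1$, near $q$, and near $(q+1)/2$. I would compute it explicitly and then measure distances to $Z_q(D)$ using the extreme values $\gcd(q,D+1)$ and $q+1-\gcd(q,D)$ near the ends, giving \eqref{eq:central-distance}.

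I expect the main obstacle to be the exact computation of the central case. One must show that no interior element of $Z_q(D)$ comes closer to $Z_q(c)$ than the endpoint bound, and this is exactly where the hypothesis ``gap of the other'' in the confinement theorem has to be verified with sharp constants.
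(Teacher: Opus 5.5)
Your treatment of the degenerate coefficients is sound and matches the paper: the endpoint formulas $\min Z_q(D)=\gcd(q,D+1)$ and $\max Z_q(D)=q+1-\gcd(q,D)$, and the reflection $Z_q(-D-1)=q+1-Z_q(D)$. So is the plan of excluding distance $\ge2$ by confinement. The gap is at the decisive step. You assert that for non-central coefficients ``every gap is shorter than the range of the other spectrum,'' but give no argument. Since you reuse the claim for even $q$, it is false there: $Z_{10}(6)=\{1,7,9\}$ has a gap of width $6$, while $Z_{10}(4)=\{5,7,9\}$ spans only $4$.

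What actually excludes confinement is the margin $\delta$ combined with divisor arithmetic. Suppose $Z_q(C)$ lies in a gap of $Z_q(A)$ at distance $\delta\ge2$. Comparing endpoints gives $c_0=\gcd(q,C)\ge\gcd(q,A)+\delta\ge3$ and $d=\gcd(q,C+1)\ge\gcd(q,A+1)+\delta\ge3$. These divisors are coprime with $c_0d\mid q$, so $(c_0-3)(d-3)\ge0$ yields $c_0+d\le q/3+3$. The containing gap then has width $2H\ge q+1-c_0-d+2\delta\ge 2q/3+2$. Since $H=\lfloor (q-1)/\len(U)\rfloor$, this forces $\len(U)=2$, i.e.\ $U=(1,1)$ and $2A+1\equiv0\pmod q$. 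This single argument gives distance at most one for every non-central pair, for even and odd $q$ alike.

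Note also that the confinement theorem needs the length inequality $(L+R+2)H\le N$ in addition to sector filling. Both come from the profile in its actual form: $j_D(s)=\dist(s,Z_q(D))$, with excursion heights $\lfloor N/\len(U_i)\rfloor$. Neither follows from the `gcd of coordinates' description you give.

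Two further steps need correcting. First, counting $D$- and $(D+1)$-steps gives no parity constraint, because the count reduces to an identity in $s$. Parity comes from the sign of $Q_{D,s}=T_{D+1}\tau_s$, which is $(-1)^{q-d_1+s-1}$ with $d_1=\gcd(q,D+1)$. Hence $s\equiv\gcd(q,D+1)\pmod 2$ for every $q$, not only even $q$. For odd $q$ this makes every cut odd, so distance at most one already means intersection. The unspecified ``parity-free argument'' is unnecessary, and you do not supply one.

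Second, $Z_q(c)$ is exactly $\{1,q\}$. Since $2c+1=q$, the two steps are $c$ and $-c$ with $c$ a unit, so $R_{c,s}$ moves along a single bidirected cycle, and a simple path cannot reverse direction. There is no cut near $(q+1)/2$. Such a cut would contradict the formula you are proving: against $Z_{15}(5)=\{3,5,7,9,11\}$, the formula predicts distance $2$. With $Z_q(c)=\{1,q\}$, the central formula follows immediately from the two endpoint formulas, so the obstacle you anticipate does not arise.
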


The extension criterion and the spectral compatibility problem express the same obstruction in different coordinates. Section~\ref{sec:reduction} proves that a fixed $E$ extends exactly when $Z_q(u_E)$ and $Z_q(u_E-B)$ are at distance at most one. Conversely, every triple $(q,B,u)$ is realized by such an extension problem on an abelian group. Choose $g>q$ with $g\equiv B\pmod q$, take $a=(0,1)$ and $b=(1,1)$ in $\Z_q\times\Z_g$, and choose $|E|\equiv u\pmod q$. Then $H=\Z_q\times\{0\}$, $r=1$, and $t=0$. Thus prescribed extension already contains the full compatibility problem in the abelian case. The elementary endpoint formulas determine the larger exceptional distances. The special case $C=-A-1$, where $Z_q(C)=q+1-Z_q(A)$, is the cut-reflection theorem used for the abelian case in~\cite{Park2026}.

\subsection*{The geometric argument}
The classical lattice description of $Z_q(D)$ can be read from cycle counts. Closing the open component in~\eqref{eq:cut} gives a permutation $Q_{D,s}$. The number
\[
 j_D(s)=\cyc(Q_{D,s})-1
\]
of remaining closed components traces a succession of triangular excursions. Each excursion corresponds to a primitive positive vector $U$ of
\[
 \Lambda_D=\{(x,y)\in\Z^2:Dx+(D+1)y\equiv0\pmod q\},
\]
and has height $\lfloor(q-1)/(x+y)\rfloor$. The zeros are precisely the Hamiltonian cuts. The endpoint divisors $\gcd(q,D+1)$ and $\gcd(q,D)$ determine the lengths of the initial and final linear segments of the profile, respectively.

Large excursions are separated by other excursions: positive integer combinations of two primitive vectors supply intervening lattice points. Our confinement theorem turns this separation into a comparison of two independently ordered sequences. Read their union from left to right and split it into maximal runs of points from the same sequence. Sector filling excludes five runs, and a length inequality excludes four. If the ranges overlap, a separated pair therefore has exactly three runs: one sequence lies entirely within a single gap of the other.

For actual cut spectra, the inner endpoint divisors make that containing gap too large to arise from a primitive vector of length at least three. The only remaining vector is $(1,1)$, which is present precisely at the odd central coefficient. Figure~\ref{fig:profiles} contrasts intersecting spectra with this unique exceptional geometry. The proof combines arc forcing with primitive lattice-point geometry; Appendix~\ref{app:profile} derives the cycle profile by local surgery on a torus.

\begin{figure}[tb]
\centering
\begin{minipage}[t]{0.48\textwidth}
\centering
\begin{tikzpicture}[x=0.178125cm,y=0.287500cm,font=\scriptsize,>=Stealth]
\draw[->] (1,0)--(36.6491,0) node[right] {$s$};
\draw[->] (1,0)--(1,9.2174) node[above] {$j(s)$};
\draw[thick] plot coordinates {(1,2) (2,1) (3,0) (4,1) (5,0) (6,1) (7,2) (8,1) (9,0) (10,1) (11,0) (12,1) (13,2) (14,1) (15,0) (16,1) (17,0) (18,1) (19,2) (20,1) (21,0) (22,1) (23,0) (24,1) (25,2) (26,1) (27,0) (28,1) (29,0) (30,1) (31,2) (32,1) (33,0)};
\draw[gray,densely dashed,thick] plot coordinates {(1,0) (2,1) (3,2) (4,3) (5,4) (6,5) (7,6) (8,7) (9,8) (10,7) (11,6) (12,5) (13,4) (14,3) (15,2) (16,1) (17,0) (18,1) (19,0) (20,1) (21,0) (22,1) (23,0) (24,1) (25,0) (26,1) (27,2) (28,1) (29,0) (30,1) (31,0) (32,1) (33,2)};
\fill (3,0) circle (1.6pt);
\fill (5,0) circle (1.6pt);
\fill (9,0) circle (1.6pt);
\fill (11,0) circle (1.6pt);
\fill (15,0) circle (1.6pt);
\fill (17,0) circle (1.6pt);
\fill (21,0) circle (1.6pt);
\fill (23,0) circle (1.6pt);
\fill (27,0) circle (1.6pt);
\fill (29,0) circle (1.6pt);
\fill (33,0) circle (1.6pt);
\draw[gray] (1,0) circle (2.5pt);
\draw[gray] (17,0) circle (2.5pt);
\draw[gray] (19,0) circle (2.5pt);
\draw[gray] (21,0) circle (2.5pt);
\draw[gray] (23,0) circle (2.5pt);
\draw[gray] (25,0) circle (2.5pt);
\draw[gray] (29,0) circle (2.5pt);
\draw[gray] (31,0) circle (2.5pt);
\node[below=5pt] at (1,0) {1};
\node[below=5pt] at (17,0) {17};
\node[below=5pt] at (33,0) {33};
\end{tikzpicture}
\par\smallskip {\small (a) Overlapping cut spectra}
\par{\scriptsize $q=33$; solid: $D=2$, dashed: $D=24$.}
\end{minipage}
\hfill
\begin{minipage}[t]{0.48\textwidth}
\centering
\begin{tikzpicture}[x=0.407143cm,y=0.328571cm,font=\scriptsize,>=Stealth]
\draw[->] (1,0)--(16.5965,0) node[right] {$s$};
\draw[->] (1,0)--(1,8.0652) node[above] {$j(s)$};
\draw[thick] plot coordinates {(1,0) (2,1) (3,2) (4,3) (5,4) (6,5) (7,6) (8,7) (9,6) (10,5) (11,4) (12,3) (13,2) (14,1) (15,0)};
\draw[gray,densely dashed,thick] plot coordinates {(1,2) (2,1) (3,0) (4,1) (5,0) (6,1) (7,0) (8,1) (9,0) (10,1) (11,0) (12,1) (13,2) (14,3) (15,4)};
\fill (1,0) circle (1.6pt);
\fill (15,0) circle (1.6pt);
\draw[gray] (3,0) circle (2.5pt);
\draw[gray] (5,0) circle (2.5pt);
\draw[gray] (7,0) circle (2.5pt);
\draw[gray] (9,0) circle (2.5pt);
\draw[gray] (11,0) circle (2.5pt);
\node[below=5pt] at (1,0) {1};
\node[below=5pt] at (8,0) {8};
\node[below=5pt] at (15,0) {15};
\end{tikzpicture}
\par\smallskip {\small (b) The central containing gap}
\par{\scriptsize $q=15$; solid: $D=7$, dashed: $D=5$.}
\end{minipage}
\caption{Cycle profiles computed directly from $Q_{D,s}$. Filled and open markers are Hamiltonian cuts. In (a) the two spectra share the cut $17$. In (b), the central spectrum $\{1,15\}$ encloses $Z_{15}(5)$ within its single gap; their distance is two.}\label{fig:profiles}
\end{figure}

\subsection*{Relation to earlier work}
Darijani, Miraftab, and Witte Morris~\cite{DMM} proved that the Cartesian product of two directed cycles has two arc-disjoint Hamiltonian paths, answering a question of Gallian, and obtained the same conclusion for products of at least four directed cycles. They formulated the finite abelian two-generator conjecture and recorded the arc-forcing and lattice tools used to study it. The abelian conjecture was proved in~\cite{Park2026} by a self-reflection argument for Hamiltonian cuts together with the reductions of~\cite{DMM}. The confinement argument gives a uniform construction for normal difference subgroups, including the finite abelian case.

The primitive-ray parametrization and its stronger cycle-class interpretation are due to Curran and Witte~\cite{CW}. We use the consecutive-generator form and give a proof in Appendix~\ref{app:profile}. The local intersection argument follows their Lemma~7.9; a pairing of residues gives the total multiplicity and completes the profile. The principal result here is the exact classification of prescribed coset choices. Its arithmetic form is the complete distance formula of Theorem~\ref{thm:spectra}. The underlying confinement theorem compares independently ordered sequences through two geometric conditions: sector filling bounds alternation, and the length inequality excludes interlacing. The resulting exceptional residues yield the sharp extension theorem with two free cosets.

The normality hypothesis concerns the specified generating pair. It holds for every generating pair of an abelian group and makes $G$ metacyclic, since $H$ and $G/H$ are cyclic. Conversely, every finite nonabelian metacyclic group admits a generating pair satisfying the hypothesis; see Corollary~\ref{cor:metacyclic}. Without normality, connected two-generator Cayley digraphs need not have even one Hamiltonian path~\cite{Morris2013}. On a nilpotent group every two-generator Cayley digraph has one~\cite{Morris2012}, but Section~\ref{sec:examples} shows that the dihedral group of order eight, which is nilpotent, has a generating pair with a Hamiltonian path and no two arc-disjoint Hamiltonian paths.

We first formulate the terminal-coset extension problem in Section~\ref{sec:reduction}. Section~\ref{sec:geometry} describes the cut profiles and their two geometric inequalities. The confinement theorem and the spectral classification occupy Section~\ref{sec:confinement}. Section~\ref{sec:failure} derives the exceptional residues and proves the two-free-coset theorem. The final section records construction bounds and examples, and Appendix~\ref{app:profile} proves the cycle profile.

\section{The terminal-coset extension problem}\label{sec:reduction}
We retain the right Cayley convention. The following observation applies to arbitrary difference subgroups.

\begin{lemma}[Arc forcing]\label{lem:forcing}
Let $P$ be a Hamiltonian path in $\Cay(G;\{a,b\})$, and write $\eta=ba^{-1}$. Every left coset $xH$ not containing the terminal vertex uses one generator throughout. If the terminal vertex is $\tau$, the choices at
\[
 \tau\eta,\tau\eta^2,\ldots,\tau\eta^{q-1}
\]
are a string of $a$-choices followed by a string of $b$-choices, with either string possibly empty.
\end{lemma}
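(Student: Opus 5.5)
The argument rests on one identity: for every $x\in G$ and every $k\ge1$,
\[
 x\eta^{k}b = x\eta^{k+1}a,
\]
since $b=\eta a$. So the vertex $xb$ is the common out-neighbour of $x$ under $b$ and of $x\eta^{-1}$ under $a$. Along the left coset $xH=\{x\eta^k\}$, the $b$-arc out of $x\eta^k$ and the $a$-arc out of $x\eta^{k+1}$ have the same head. In a Hamiltonian path every vertex other than the initial vertex has in-degree exactly one. Hence at most one of these two arcs is used. If some vertex has both arcs used, this count shows the path cannot be Hamiltonian.

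For a coset $xH$ not containing $\tau$, every vertex $x\eta^k$ has out-degree one in $P$. Suppose the coset is not monochromatic. Going cyclically around $\eta$, there is then an index $k$ with $x\eta^k$ choosing $b$ and $x\eta^{k+1}$ choosing $a$. This is because a cyclic $\{a,b\}$-string containing both letters has a $b$ immediately followed by an $a$.

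These two arcs share the head $x\eta^{k}b=x\eta^{k+1}a$, which is then entered twice. That is impossible unless the head is the initial vertex of the path, which has in-degree zero in $P$. In that case, though, it is entered at least once, already a contradiction. So no such $k$ exists, and the coset is monochromatic.

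Now take the coset $\tau H$. Here $\tau$ has no out-arc, and the remaining vertices $\tau\eta,\dots,\tau\eta^{q-1}$ each choose a generator. The same collision argument applies to every consecutive pair $\tau\eta^k,\tau\eta^{k+1}$ with $1\le k\le q-2$. Both vertices make a choice, so the pattern ``$b$ then $a$'' cannot occur. The only pairs exempt from the argument are those involving $\tau$ itself, namely $(\tau\eta^{q-1},\tau)$ and $(\tau,\tau\eta)$. This breaks the cycle, and the linear string $\tau\eta,\dots,\tau\eta^{q-1}$ avoids the substring $ba$. It is therefore of the form $a\cdots ab\cdots b$, with either block possibly empty.

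The main thing to check carefully is the indexing convention: that it is the $b$-choice at $\tau\eta^{k}$ that collides with the $a$-choice at $\tau\eta^{k+1}$, and not the reverse. This is what gives the order $a$'s first and then $b$'s in the right Cayley convention. The computation $x\eta^k b=x\eta^k\eta a=x\eta^{k+1}a$ settles it. Normality of $H$ is not needed, because only left cosets and right multiplication are involved.
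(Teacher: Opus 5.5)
Your proof is correct and is essentially the paper's argument: the shared head $x\eta^{k}b=x\eta^{k+1}a$ forbids a $b$-choice immediately followed by an $a$-choice along the $\eta$-orbit, which forces constancy on every full coset and the pattern $a\cdots ab\cdots b$ on $\tau\eta,\ldots,\tau\eta^{q-1}$ once $\tau$ is removed. One small slip: the sentence calling $xb$ the $a$-out-neighbour of $x\eta^{-1}$ should say $x\eta$ (take $k=0$ in your own identity), since $x\eta^{-1}a=xab^{-1}a$ is not $xb$ in general.
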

\begin{proof}
The arcs $x\to xb$ and $x\eta\to x\eta a=xb$ have the same head. Hence a $b$-choice at $x$ prevents an $a$-choice at $x\eta$. On a coset with all outgoing arcs present, a single $b$-choice propagates around the entire $\eta$-orbit. On the terminal coset, deleting its unique terminal position leaves the asserted ordered string.
\end{proof}
This is the arc-forcing observation underlying the classical terminal-coset theory; see~\cite[Lemma~3.9]{DMM} for its abelian formulation and historical attribution.

\begin{lemma}[Terminal cosets]\label{lem:terminal-cosets}
If two arc-disjoint Hamiltonian paths have their terminal vertices in distinct left cosets of $H$, their two unused arcs can be added to give a decomposition into two Hamiltonian cycles. In particular, the existence of a pair implies the existence of a pair whose terminal vertices lie in the same left coset of $H$.
\end{lemma}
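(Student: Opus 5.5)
The plan is to count degrees to locate the two unused arcs, and then use the fact that the two in-arcs of any vertex come from a single left coset of $H$ to force each unused arc to close its own path. Let $P_1,P_2$ be the arc-disjoint Hamiltonian paths, with $P_k$ running from $\sigma_k$ to $\tau_k$, and assume $\tau_1H\neq\tau_2H$; in particular $\tau_1\neq\tau_2$.

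\emph{Degree count.} Each path has $n-1$ arcs, so exactly two of the $2n$ arcs are unused. Every vertex has out-degree $2$ in the Cayley digraph. Each vertex other than $\tau_k$ has an out-arc in $P_k$. Hence the two unused arcs have tails $\tau_1$ and $\tau_2$, one each. Dually, the heads of the unused arcs are the vertices lacking an in-arc in some path, namely $\sigma_1$ and $\sigma_2$, counted with multiplicity.

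\emph{Key observation.} The two in-neighbours of a vertex $w$ are $wa^{-1}$ and $wb^{-1}=wa^{-1}\eta^{-1}$, which lie in the same left coset $K_w=wa^{-1}H$. Moreover, if a path uses a single generator $y\in\{a,b\}$ throughout $K_w$, then right multiplication by $y$ maps $K_w$ bijectively onto $K_wy=wH$ (since $K_wa=K_wb$). That path therefore enters $w$ from $K_w$.

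\emph{Identifying heads.} Suppose first $\sigma_1=\sigma_2$. Then both in-arcs of $\sigma_1$ are unused, so $\tau_1,\tau_2\in K_{\sigma_1}$, contradicting $\tau_1H\ne\tau_2H$. So $\sigma_1\ne\sigma_2$, and each $\sigma_k$ has exactly one unused in-arc. Suppose the unused arc into $\sigma_2$ had tail $\tau_1$. Then $K_{\sigma_2}=\tau_1H$, a coset not containing $\tau_2$. By Lemma~\ref{lem:forcing}, $P_2$ uses one generator throughout $\tau_1H$, so by the observation $P_2$ enters $\sigma_2$, which is impossible. Hence the unused arc into $\sigma_2$ is $\tau_2\to\sigma_2$, and the other is $\tau_1\to\sigma_1$. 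Adding $\tau_k\to\sigma_k$ to $P_k$ closes each path into a Hamiltonian cycle, and the two cycles are arc-disjoint and together use all $2n$ arcs. This gives the decomposition.

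\emph{The ``in particular'' clause.} If the given pair already has terminals in one coset, we are done. Otherwise take the decomposition just obtained, fix any vertex $v$, and delete from each Hamiltonian cycle its arc leaving $v$. This yields two arc-disjoint Hamiltonian paths, both ending at $v$, hence with terminals in the same coset.

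The only delicate step is identifying heads: ruling out the crossed pairing $\tau_1\to\sigma_2$. The argument above settles it with arc forcing on the coset $\tau_1H$, so I expect no real obstacle beyond checking the case $\sigma_1=\sigma_2$ separately.
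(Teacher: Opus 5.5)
Your proof is correct, but it is organized differently from the paper's. The paper argues constructively. By arc forcing, the second path is constant on $\tau_1H$, so the first path uses the opposite generator at every vertex of $\tau_1H$ except $\tau_1$. Adding the opposite generator's arc at $\tau_1$, and symmetrically at $\tau_2$, turns each color into a map that is constant on every left coset of $H$. Such a map is a permutation, because the two possible predecessors of any vertex lie in one $\eta$-orbit and exactly one of them is selected. A permutation containing a Hamiltonian path plus one arc out of its terminal vertex is a Hamiltonian cycle, and the added arcs are by construction the unused ones.

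You instead pin down the unused arcs by the in- and out-degree count, and you eliminate the configurations $\sigma_1=\sigma_2$ and $\tau_1\to\sigma_2$ by contradiction. The ingredients coincide: arc forcing for one path on the other path's terminal coset, and the fact that the in-neighbours of a vertex share a left coset of $H$. Your bookkeeping uses arc forcing only once and identifies the unused arcs explicitly as $\tau_k\to\sigma_k$. The paper's version additionally shows that both closed colors make complementary constant choices on every coset.

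One justification in your key observation is wrong and should be replaced. You claim that right multiplication by $y$ maps $K_w$ onto $K_wy=wH$. This fails in general: for either $y$, $K_wy=wa^{-1}Ha=w\,(a^{-1}Ha)$, which equals $wH$ only when $a$ normalizes $H$. The lemma is stated before normality is assumed. The paper applies it in Section~\ref{sec:examples} to the dihedral group of order eight with the non-normal subgroup $H=\langle ab\rangle$. There, for $w=1$, one has $K_1a=\{1,a^3b\}\neq\{1,ab\}=H$.

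The equality is not needed, though. Since $wy^{-1}\in\{wa^{-1},wb^{-1}\}\subseteq K_w$ and the path uses $y$ at every vertex of $K_w$, the path contains the arc $wy^{-1}\to w$. That is all your identification of the heads actually uses, so the rest of the argument goes through unchanged.
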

\begin{proof}
On the terminal coset of the first path, the second path makes a complete constant generator choice. The first path therefore uses the opposite generator at every position except its terminal vertex. Add the opposite generator's arc there, and perform the analogous operation for the other path. Both colors now make complementary constant choices on every coset. Each is a permutation: the two possible predecessors of any head lie in the same $\eta$-orbit, and precisely one is chosen. The arc added to each spanning path closes that path into a Hamiltonian cycle. Deleting both cycles' outgoing arcs at a common vertex gives the last assertion.
\end{proof}

Assume from now on that $H$ is normal. Put $q=|H|$, $g=[G:H]$, and use~\eqref{eq:intro-coordinates}. In coordinates $\eta^z a^j$, right multiplication is
\begin{equation}\label{eq:arcs}
 (z,j)a=(z+c_j,j+1),\qquad
 (z,j)b=(z+c_j+r^j,j+1),
 \qquad
 c_j=\begin{cases}t,&j=g-1,\\0,&j<g-1,\end{cases}
\end{equation}
where the first coordinate is modulo $q$ and the second modulo $g$.

Let $g\ge2$ and fix $E\subseteq\{1,\ldots,g-1\}$. On the nonterminal cosets the two colors make the choices described in the introduction. Define
\begin{equation}\label{eq:returns}
 A=u_E=t+\sum_{j\in E}r^j,\qquad
 B=2t+\sum_{j=0}^{g-1}r^j,\qquad C=A-B\pmod q.
\end{equation}
We index the first and second colors by $i=0,1$. On the terminal coset let $\chi_i(z)$ be $0$ or $1$ according as color $i$ uses $a$ or $b$, and leave it undefined at that color's terminal vertex. The two return maps are
\begin{equation}\label{eq:return-map}
 z\longmapsto z+A+\chi_0(z),\qquad
 z\longmapsto z+(B-1-A)+\chi_1(z).
\end{equation}
Every nonterminal coset map is a bijection. Hence each color's nonterminal arcs decompose the vertices into $q$ disjoint directed segments from $Ha$ to $H$. The choices in $H$ join those segments.

\begin{lemma}[Realization of adjacent cuts]\label{lem:realization}
If $s\in Z_q(A)$ and $v\in Z_q(C)$ satisfy $|s-v|\le1$, the prescribed nonterminal choices extend to two arc-disjoint Hamiltonian paths with terminal vertices in $H$.
\end{lemma}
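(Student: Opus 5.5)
The plan is to build both paths directly from the two cuts, by normalizing coordinates on the terminal coset so that each return map in~\eqref{eq:return-map} becomes a map of the form~\eqref{eq:cut}. The construction has three ingredients.

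\textbf{Gluing principle.} The nonterminal arcs of each color split the vertex set into $q$ directed segments from $Ha$ to $H$. Hence a color is a Hamiltonian path exactly when its return map on $H$, a partial map with one undefined point at the terminal vertex, is a single Hamiltonian path on $H$. Following the return path and inserting the segments then traverses every vertex once.

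\textbf{Arc-disjointness.} Two arcs with different tails are different arcs. On nonterminal cosets the choices are complementary by construction. So arc-disjointness only requires that, at every $z\in H$ where both colors are defined, they choose different generators, that is, $\chi_1(z)=1-\chi_0(z)$. At the terminal vertex of one color, the other color's choice is free.

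\textbf{First color.} Translate the first coordinate on $H$ so that color~$0$ terminates at $z=0$. Since the return map $z\mapsto z+A+\chi_0(z)$ commutes with translation, nothing else changes. Put $\chi_0=0$ on $[1,s)$ and $\chi_0=1$ on $[s,q)$. The return map is then exactly $R_{A,s}$, which is a Hamiltonian path because $s\in Z_q(A)$. By Lemma~\ref{lem:forcing}, this is also the only admissible shape.

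\textbf{Second color.} Complementarity forces $\chi_1=1$ on $[1,s)$ and $\chi_1=0$ on $[s,q)$, while $\chi_1(0)$ remains free. Lemma~\ref{lem:forcing} requires that, read cyclically from just after its terminal vertex $\tau_1$, color~$1$ shows a string of $a$'s followed by a string of $b$'s. There are two natural choices, and I would treat them as the two cases $v=s'$ and $v=s'\pm1$:
\begin{itemize}
\item $\tau_1=s-1$ with $\chi_1(0)=1$. Reading from $s$ gives $a$'s on $[s,q-1]$ and then $b$'s on $\{0,1,\dots,s-2\}$.
\item $\tau_1=s$ with $\chi_1(0)=0$. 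Reading from $s+1$ gives $a$'s on $[s+1,q-1]\cup\{0\}$ and then $b$'s on $[1,s-1]$.
\end{itemize}
In each case, translating by $\tau_1$ turns the return map $z\mapsto z+(B-1-A)+\chi_1(z)=z-(C+1)+\chi_1(z)$ into $R_{-C-1,s'}$ for an explicit $s'$. The two choices give cut positions $s'$ differing by exactly one, of the form $q+1-s$ and $q-s$, up to the precise offset that I would verify.

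\textbf{Reflection step.} Next I would prove the reflection identity $Z_q(-D-1)=q+1-Z_q(D)$ by conjugating $R_{D,s}$ with $z\mapsto -z$ followed by a translation. The negation sends the step $D$ to $-D$ and the step $D+1$ to $-D-1$, and it reverses the order of the interval $[1,q)$. Re-centering the undefined point and renormalizing the two step sizes then yields $R_{-D-1,\,q+1-s}$, up to an isomorphism of functional digraphs. Applying this with $D=C$, color~$1$ is a Hamiltonian path exactly when the corresponding cut position lies in the image of $Z_q(C)$ under the reflection. Unwinding the reflection, the two choices of $\tau_1$ correspond to $v\in\{s-1,s\}$ or to $v\in\{s,s+1\}$; one of these pairs covers every $v$ with $|s-v|\le1$. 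Hence the hypothesis $v\in Z_q(C)$ supplies a valid choice of $\tau_1$ and $\chi_1(0)$.

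\textbf{Expected obstacle.} Combining the two colors via the gluing principle then gives two arc-disjoint Hamiltonian paths with terminal vertices in $H$. The main difficulty is the index bookkeeping in the last two steps: making sure the translation by $\tau_1$, the negation, and the boundary cases $s=1$, $s=q$ and $v=q$ land on exactly $v$ and $v\pm1$ with no off-by-one shift. I would settle this first on a small example, such as $q=15$ from Figure~\ref{fig:profiles}, and then write the conjugation explicitly.
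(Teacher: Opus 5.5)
Your overall route is the paper's: put color~$0$ in the form $R_{A,s}$, place the terminal $\tau_1$ of color~$1$ at $s-1$ or $s$, and turn its return map into a cut map by translation and negation. The gap is that the two configurations you actually list both realize only $v=s$.

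Carry out your own translation $w=z-\tau_1$:
\begin{itemize}
\item For $(\tau_1,\chi_1(0))=(s-1,1)$, the $a$-positions $[s,q-1]$ become $w\in[1,q-s]$.
\item For $(\tau_1,\chi_1(0))=(s,0)$, the $a$-positions $[s+1,q-1]\cup\{0\}$ also become $w\in[1,q-s]$.
\end{itemize}
So both give $R_{-C-1,\,q+1-s}$. By the reflection identity, both succeed exactly when $s\in Z_q(C)$. Your claimed pair of cut positions $q+1-s$ and $q-s$ is false, and the cases $v=s\pm1$ are never reached.

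This is not a boundary technicality. By~\eqref{eq:parity}, if $q$ is even and $A\not\equiv C\pmod 2$, then every $s\in Z_q(A)$ and every $v\in Z_q(C)$ have opposite parities, so $v=s$ never occurs. The paper's sharpness example $(q,g,r,t)=(12,9,1,4)$ is of this kind. There the successful residue has $A=6$ and $C=1$, and it is realized with cuts $(s,v)=(1,2)$. Since no admissible pair has $v=s$, neither of your configurations can produce those paths.

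The missing point is that $\chi_1(0)$ is free for either choice of $\tau_1$. Color~$0$ has no outgoing arc at $0$, so complementarity imposes nothing there. Also, $0$ sits at the junction of the $a$-string and the $b$-string of color~$1$, so either value is compatible with Lemma~\ref{lem:forcing}. All four combinations are needed, and the same computation handles the two you omitted:
\begin{itemize}
\item For $(\tau_1,\chi_1(0))=(s,1)$, the $a$-positions are $w\in[1,q-s-1]$. This gives $R_{-C-1,\,q-s}$, hence $v=s+1$ (for $s<q$).
\item For $(\tau_1,\chi_1(0))=(s-1,0)$, the $a$-positions are $w\in[1,q-s+1]$. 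This gives $R_{-C-1,\,q-s+2}$, hence $v=s-1$ (for $s\ge2$).
\end{itemize}
Together with a configuration for $v=s$, such as your $(s,0)$ with $\tau_1$ read modulo $q$ when $s=q$, this is exactly the paper's terminal table.

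The paper reflects by $x=\tau_1-z$ in one step and lands directly on $R_{C,v}$. Your translation followed by the identity $Z_q(-C-1)=q+1-Z_q(C)$ is equivalent. That identity is already proved in Lemma~\ref{lem:endpoints} by plain negation, with no re-centering, so you need not reprove it.
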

\begin{proof}
Color $0$ omits the outgoing arc at $0$, uses $a$ at $1\le z<s$, and uses $b$ at $s\le z<q$. For color $1$ use the table below, omitting the outgoing arc at $h$ and using $a$ at all remaining coordinates outside the indicated interval.
\begin{center}
\begin{tabular}{@{}ccc@{}}
\toprule
Relation & Terminal coordinate $h$ & Coordinates using $b$ in color $1$\\
\midrule
$v=s$ & $s\pmod q$ & $1,\ldots,s-1$\\
$v=s+1$ & $s$ & $0,\ldots,s-1$\\
$v=s-1$ & $s-1$ & $1,\ldots,s-2$\\
\bottomrule
\end{tabular}
\end{center}
Empty intervals are interpreted as empty sets. At every coordinate where both colors have an outgoing arc, their generators differ.

The first map in~\eqref{eq:return-map} is $R_{A,s}$. Under the reflection $x=h-z$, the second map becomes
\[
 x\longmapsto x+C+1-\chi_1(h-x),
\]
which is $R_{C,v}$ in each row. Thus each partial return joins all $q$ segments into one path. Exactly one vertex of $Ha$ has no incoming terminal arc in each color; it is the initial vertex of that path. Each resulting path visits every vertex of $G$, and their arc sets are disjoint.
\end{proof}

\begin{proposition}[Exact reduction for fixed choices]\label{prop:fixed-reduction}
For a fixed set $E$ of nonterminal coset choices, an extension with both terminal vertices in $H$ exists if and only if
\begin{equation}\label{eq:fixed-reduction}
 \dist(Z_q(u_E),Z_q(u_E-B))\le1.
\end{equation}
\end{proposition}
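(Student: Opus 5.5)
The sufficiency direction is Lemma~\ref{lem:realization}: if $s\in Z_q(A)$ and $v\in Z_q(C)$ with $|s-v|\le1$, where $A=u_E$ and $C=u_E-B$, the extension exists. So the work is the converse. Suppose two arc-disjoint Hamiltonian paths extend the choices on the nonterminal cosets and both terminate in $H$. We must produce adjacent cuts $s\in Z_q(A)$, $v\in Z_q(C)$.

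First use Lemma~\ref{lem:forcing} to describe the terminal coset. Translate coordinates so that color~$0$ terminates at $z=0$. Then its choices at $1,\ldots,q-1$ are $a$'s followed by $b$'s, say $a$ on $1\le z<s$ and $b$ on $s\le z<q$ for some $1\le s\le q$. The first return map in~\eqref{eq:return-map} is then exactly $R_{A,s}$. Since the nonterminal segments are joined into one spanning path, $R_{A,s}$ is a Hamiltonian path, so $s\in Z_q(A)$.

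Next treat color~$1$. Let its terminal vertex be $h$. By Lemma~\ref{lem:forcing} applied along $h+1,h+2,\ldots$, its choices form a string of $a$'s followed by $b$'s. Under the reflection $x=h-z$ used in Lemma~\ref{lem:realization}, the second return map becomes $x\mapsto x+C+1-\chi_1(h-x)$. Reading the forcing string in reflected order turns it into the form $R_{C,v}$ for some $v$, and Hamiltonicity gives $v\in Z_q(C)$.

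The remaining, and main, step is to show that arc-disjointness forces $|s-v|\le1$. At every coordinate other than $0$ and $h$, the two colors must use different generators, because otherwise they would share an arc. One checks that the only head that can be doubly covered is via the $\eta$-relation. Since both choice patterns are monotone "$a$ then $b$" strings read in the $\eta$-direction, complementarity off $\{0,h\}$ leaves little freedom. The switch point of color~$1$ must sit at $s$ or its neighbour. Its terminal $h$ must be $s$, $s-1$, or adjacent to $0$, with the $b$-interval of color~$1$ equal to $[1,s-1]$, $[0,s-1]$, or $[1,s-2]$. This is exactly the case analysis of the table in Lemma~\ref{lem:realization}, now run in reverse.

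I would carry out this analysis by splitting on the position of $h$ relative to $s$. In each case, compare the two indicator strings on the cyclic interval from $1$ to $q-1$, excluding $h$. Two monotone $0/1$ strings in opposite orientation can be complementary only if their switch points nearly coincide. The care needed is in the wrap-around cases $h=0$ and $s=q$ (so $s\equiv0$), where the conventions $R(0)=\bot$ and $s\in\{1,\ldots,q\}$ must be tracked. After reflection, this yields $v\in\{s-1,s,s+1\}$, proving~\eqref{eq:fixed-reduction}.
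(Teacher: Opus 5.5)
Your outline is the paper's argument: translate the first terminal to $0$, read off $s\in Z_q(A)$ from arc forcing, use complementarity of color~$1$ off $\{0,h\}$, and reflect about $h$ to obtain $R_{C,v}$. The gap is in the step you yourself call the main one. It is only asserted (``switch points nearly coincide''), and the classification you state for it is not correct.

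For $1<s<q$, the fact that pins down $h$ is concrete. If $h\notin\{s-1,s\}$, color~$1$ uses $b$ at $s-1$ and $a$ at $s$, which gives two arcs into $\eta^{s-1}b=\eta^{s}a$; equivalently, it breaks the monotone pattern of Lemma~\ref{lem:forcing}. With $h\in\{s-1,s\}$, the bit $\chi_1(0)$ is still free, so there are four configurations:
\[
 (h,\chi_1(0),v)\in\{(s-1,0,s-1),\,(s-1,1,s),\,(s,0,s),\,(s,1,s+1)\}.
\]
The second one ($h=s-1$, with $b$ on $\{0,\ldots,s-2\}$) is absent from your list. It is also not a row of the table in Lemma~\ref{lem:realization}, so necessity is not literally that table run in reverse.

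Your claim that $h$ is $s$, $s-1$, or adjacent to $0$ also fails for $s=1$ (resp.\ $s=q$). If color~$1$ uses $a$ (resp.\ $b$) at every coordinate other than $h$, then $h$ is arbitrary. Only the nonconstant alternative forces $h=1$ and $v=2$ (resp.\ $h=q-1$ and $v=q-1$).

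None of these omissions endangers the conclusion, because every missing configuration gives $v=s$. In the constant cases the reflected map is $R_{C,1}$ or $R_{C,q}$ whatever $h$ is. Your proof is therefore complete once you replace the heuristic with this explicit finite case analysis, which is exactly what the paper does.
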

\begin{proof}
Sufficiency is Lemma~\ref{lem:realization}. For necessity, left-translate by the inverse of the first terminal vertex, an element of $H$. This preserves $E$ and places that terminal at $0$. Arc forcing gives a unique cut $s$ for color $0$.

Let $h$ be the second terminal coordinate. At every $z\ne0,h$, the second color uses the opposite generator. When $1<s<q$, its choices at $s-1,s$ would be $b,a$, causing an incoming conflict. Thus $h\in\{s-1,s\}$. The choice at zero is free. After reflection about $h$, the four possibilities $(h,\chi_1(0),v)$ are
\[
 (s-1,0,s-1),\quad(s-1,1,s),\quad
 (s,0,s),\quad(s,1,s+1).
\]
The reflected coefficient is always $C=u_E-B$.

When $s=1$, the other color either uses $a$ at every nonterminal position, giving $v=1$, or uses $b$ only at zero and has terminal coordinate one, giving $v=2$. When $s=q$, the symmetric possibilities give $v=q$ or $q-1$. The constant-choice cases allow arbitrary terminal coordinates, with the same reflected cut. This includes $q=2$ and completes the classification.
\end{proof}

The necessity in this proposition holds for each prescribed $E$. It is therefore stronger than a reduction that merely produces some convenient external assignment. Together with Lemma~\ref{lem:terminal-cosets}, it also gives a complete existence normal form when the nonterminal choices are allowed to vary.

\section{Cut profiles and their geometric inequalities}\label{sec:geometry}
Permutation products act from right to left. For $1\le s\le q$, let $\tau_s$ be the backward cycle on $0,\ldots,s-1$, extended by the identity, and let $T_v(z)=z+v$.

\subsection{Elementary cut identities}
The map $R_{D,s}$ is injective on $\Zq\setminus\{0\}$ and has unique missing image $D+s$. Its formal closure is
\begin{equation}\label{eq:Q}
 Q_{D,s}=T_{D+1}\tau_s.
\end{equation}
Deleting the formal arc $0\to D+s$ opens the cycle containing zero. Consequently,
\begin{equation}\label{eq:cycle-test}
 s\in Z_q(D)\quad\Longleftrightarrow\quad\cyc(Q_{D,s})=1.
\end{equation}
The permutation $Q_{D,s}$ records the cycle structure of the partial map, whose arcs belong to the two-generator digraph.

\begin{lemma}\label{lem:endpoints}
For $q\ge2$ and $D\in\Zq$, write $d_0=\gcd(q,D)$ and $d_1=\gcd(q,D+1)$. Then
\begin{align}
 \min Z_q(D)&=d_1,&\max Z_q(D)&=q+1-d_0,\label{eq:endpoints}\\
 Z_q(-D-1)&=q+1-Z_q(D),&
 s&\equiv d_1\pmod2\quad(s\in Z_q(D)).\label{eq:parity}
\end{align}
In particular, $Z_q(0)=\{1\}$ and $Z_q(-1)=\{q\}$.
\end{lemma}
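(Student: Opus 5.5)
We plan to work throughout with the closed permutation $Q_{D,s}=T_{D+1}\tau_s$ of \eqref{eq:Q} and the cycle test \eqref{eq:cycle-test}: $s\in Z_q(D)$ if and only if $\cyc(Q_{D,s})=1$. Each of the four assertions then becomes a statement about cycle counts or signs of products of a translation with one cycle.

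\emph{Minimum.} The translation $T_{D+1}$ has exactly $d_1$ cycles. These are the residue classes modulo $d_1$, each of length $q/d_1$. If $s\le d_1$, the points $0,1,\ldots,s-1$ lie in $s$ distinct cycles of $T_{D+1}$. We use the standard fact that multiplying a permutation by an $s$-cycle whose points lie in $s$ distinct cycles merges those cycles into one. This gives
\[
 \cyc(Q_{D,s})=d_1-(s-1)\qquad(1\le s\le d_1).
\]
This count equals $1$ exactly when $s=d_1$. Hence $d_1\in Z_q(D)$ and no smaller $s$ lies in $Z_q(D)$, so $\min Z_q(D)=d_1$.

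\emph{Parity.} We compare signs. $T_{D+1}$ consists of $d_1$ cycles of length $q/d_1$, so it has sign $(-1)^{q-d_1}$. The $s$-cycle $\tau_s$ has sign $(-1)^{s-1}$. A single $q$-cycle has sign $(-1)^{q-1}$. Therefore $\cyc(Q_{D,s})=1$ forces $q-d_1+s-1\equiv q-1\pmod 2$, that is, $s\equiv d_1\pmod2$.

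\emph{Reflection.} This is the step we expect to need the most care. We will show that $\cyc(Q_{-D-1,q+1-s})=\cyc(Q_{D,s})$, and proceed in three moves.
\begin{itemize}
 \item[(i)] Inverting preserves the cycle count, and $Q_{D,s}^{-1}=\tau_s^{-1}T_{-D-1}$ is conjugate to $T_{-D-1}\tau_s^{-1}$.
 \item[(ii)] Conjugating by the negation $z\mapsto -z$ fixes translations up to sign. It sends $\tau_s^{-1}$ to the backward cycle on $\{-(s-1),\ldots,0\}$, and a further translation conjugation moves this cycle onto $\{0,\ldots,s-1\}$.
 \item[(iii)] Using $T_{-1}=\tau_q$, we write the backward $q$-cycle as the product of the backward cycle on an initial block and the backward cycle on the complementary block, which shares one point with it. This trades the $s$-cycle for a $(q+1-s)$-cycle and absorbs the leftover $T_{-1}$ into the translation coefficient.
\end{itemize}
The bookkeeping we must get right is that the coefficient becomes $-D-1$, not $-D$ or $-D-2$, and that the block has length exactly $q+1-s$. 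We will check this by tracking where the point $0$ is sent. As a sanity check, we will compare the result against the example $Z_{15}(7)=\{1,15\}$ from Figure~\ref{fig:profiles}.

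\emph{Maximum and the special cases.} With reflection in hand, the maximum follows from the minimum applied to $-D-1$, since $\gcd(q,-D-1+1)=d_0$. This gives $\max Z_q(D)=q+1-\min Z_q(-D-1)=q+1-d_0$. For $D=0$ we have $d_1=1$ and $d_0=q$, so the minimum and the maximum both equal $1$, and hence $Z_q(0)=\{1\}$. Then $Z_q(-1)=q+1-Z_q(0)=\{q\}$.
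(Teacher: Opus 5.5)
Your arguments for the minimum, the parity, the maximum, and the two special cases are the paper's. Your explicit count $\cyc(Q_{D,s})=d_1-s+1$ for $s\le d_1$ is a harmless sharpening of the paper's remark that $\tau_s$ misses a cycle of $T_{D+1}$ when $s<d_1$.

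The reflection step, however, does not close as you have sequenced it. Inversion and conjugation by $\nu\colon z\mapsto -z$ each reverse both the orientation of the short cycle and the sign of the translation. Doing both therefore returns you to where you started. Concretely, $\nu(T_{-D-1}\tau_s^{-1})\nu^{-1}=T_{D+1}\beta$, where $\beta$ is the backward cycle on $\{-(s-1),\ldots,0\}$, and translating $\beta$ gives $\tau_s$ again. So (i) followed by (ii) lands on $T_{D+1}\tau_s=Q_{D,s}$ itself.

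Move (iii) then gives $\tau_q=\tau_s\sigma$, with $\sigma$ the backward cycle on $\{s,\ldots,q-1,0\}$. Hence $Q_{D,s}=T_D\sigma^{-1}$. This is merely $Q_{D,s}$ rewritten with a \emph{forward} $(q+1-s)$-cycle. Turning $\sigma^{-1}$ into $\tau_{q+1-s}$ costs one more orientation reversal, so the plan needs exactly one reversal, not two.

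For example, skip (i). Then $\nu Q_{D,s}\nu^{-1}=T_{-D-1}\phi$, where $\phi$ is the forward cycle on $\{-(s-1),\ldots,0\}$. Your factorization (iii), applied with $q+1-s$ in place of $s$ and inverted, gives $\phi=T_1\tau_{q+1-s}$. Hence $\nu Q_{D,s}\nu^{-1}=T_{-D}\tau_{q+1-s}=Q_{-D-1,q+1-s}$ exactly.

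Your sanity check is also weak. Since $7\equiv-8\pmod{15}$ and $\{1,15\}$ is symmetric, it cannot distinguish the reflection identity from the false identity $Z_q(-D-1)=Z_q(D)$.

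The paper avoids $Q_{D,s}$ here entirely and conjugates the partial map $R_{D,s}$ by $z\mapsto -z$:
\begin{itemize}
\item the block $1\le z<s$ with step $D$ goes to $q+1-s\le w<q$ with step $-D=(-D-1)+1$;
\item the block $s\le z<q$ with step $D+1$ goes to $1\le w<q+1-s$ with step $-D-1$.
\end{itemize}
This is $R_{-D-1,q+1-s}$ read directly from \eqref{eq:cut}. Because $Z_q(D)$ is defined through $R_{D,s}$, this gives the set identity with no cycle test and no factorization. The closing arc $0\to D+s$ goes to $0\to -D-s$, so the same conjugation also gives your stronger statement: $Q_{D,s}$ and $Q_{-D-1,q+1-s}$ are conjugate. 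Once repaired, your route proves the same thing with more bookkeeping and nothing extra.
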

\begin{proof}
The translation $T_{D+1}$ has $d_1$ cycles. If $s<d_1$, the support of $\tau_s$ misses at least one cycle. At $s=d_1$ it meets each cycle once, and its cyclic rejoining makes one cycle. This proves the minimum. Reflection by $z\mapsto-z$ conjugates $R_{D,s}$ to $R_{-D-1,q+1-s}$, proving the reflection identity and the maximum. Finally,
\[
 \operatorname{sgn}(Q_{D,s})=(-1)^{q-d_1+s-1},
\]
which equals the sign $(-1)^{q-1}$ of a $q$-cycle exactly when the stated parity condition holds.
\end{proof}

\subsection{Primitive rays and the cycle profile}
For the remainder of this section assume $q\ge3$ and $D\notin\{0,-1\}$. Put $N=q-1$, and consider
\begin{equation}\label{eq:lattice}
 \Lambda_D=\{(x,y)\in\Z^2:Dx+(D+1)y\equiv0\pmod q\},\qquad
 \len(x,y)=x+y.
\end{equation}
Primitivity is taken in $\Lambda_D$. Order the primitive rays in the first quadrant from the $y$-axis to the $x$-axis, retaining an internal ray precisely when its primitive vector has length at most $N$. Write the internal vectors as $U_1,\ldots,U_k$ and put
\[
 H_i=\left\lfloor\frac{N}{\len(U_i)}\right\rfloor,
 \qquad L=d_1-1,\qquad R=d_0-1.
\]

\begin{proposition}[Cycle profile; Curran--Witte]\label{prop:profile}
Define
\[
 z_0=L,\qquad z_i=L+2\sum_{j=1}^iH_j\quad(1\le i\le k).
\]
Then $z_k=N-R$, and, with $x=s-1$,
\begin{equation}\label{eq:profile}
 j_D(s):=\cyc(Q_{D,s})-1=
 \begin{cases}
 L-x,&0\le x\le L,\\
 \min\{x-z_{i-1},z_i-x\},&z_{i-1}\le x\le z_i,\\
 x-(N-R),&N-R\le x\le N.
 \end{cases}
\end{equation}
In an internal excursion, every closed component of $R_{D,s}$ has primitive displacement $U_i$ and length $\len(U_i)$. Moreover,
\begin{equation}\label{eq:profile-zeros}
 Z_q(D)-1=\{z_0,\ldots,z_k\},\qquad
 j_D(s)=\dist(s,Z_q(D)).
\end{equation}
\end{proposition}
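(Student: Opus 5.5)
We prove Proposition~\ref{prop:profile} by tracking how $\cyc(Q_{D,s})$ changes as $s$ increases by one, and then identifying the pieces of the resulting profile with primitive lattice vectors. Since $\tau_{s+1}=\tau_s\cdot(s\;\,s-1\;\cdots)$ differs from $\tau_s$ by a single transposition, $Q_{D,s+1}=Q_{D,s}\cdot\sigma_s$ with $\sigma_s$ a transposition, so $j_D(s+1)-j_D(s)=\pm1$. The proof then has three parts: locate when the increment is $-1$ (a merge) versus $+1$ (a split), compute the heights, and read off the zeros.

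\textbf{Cycles of $Q_{D,s}$.} Put $x=s-1$. The map $Q_{D,s}$ translates by $D$ on $\{1,\ldots,s-1\}$ and by $D+1$ elsewhere, apart from the single formal arc at $0$. A closed component of $R_{D,s}$ avoiding $0$ therefore uses some number $X$ of $D$-steps and $Y$ of $(D+1)$-steps, where $DX+(D+1)Y\equiv0$. Hence $(X,Y)\in\Lambda_D$ is a positive vector of length at most $q-1=N$.

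\textbf{Endpoint segments.} For $x\le L=d_1-1$, the argument of Lemma~\ref{lem:endpoints} applies: $\tau_s$ meets $s$ distinct cosets of $\langle D+1\rangle$ and glues them, leaving $d_1-s=L-x$ extra cycles. The final segment follows from the reflection $Z_q(-D-1)=q+1-Z_q(D)$. Reflection conjugates $Q_{D,s}$ to $Q_{-D-1,q+1-s}$ and swaps $d_0$ with $d_1$, so it gives $j=x-(N-R)$ near $x=N$.

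\textbf{Interior excursions.} This is the heart of the argument and follows the local intersection of Curran--Witte's Lemma~7.9. On the torus $\R^2/\Lambda_D$, a closed component is a closed lattice path whose homology class is a positive vector of $\Lambda_D$. Two disjoint closed curves on the torus have the same primitive class, so all closed components at a given $s$ share one primitive displacement $U$. Moreover, a component must be embedded and disjoint from the open path, which forces $U$ primitive with $\len(U)\le N$.

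Next one shows the following. While $x$ moves across the range where the cut point sweeps through a strip of width $\len(U_i)$, each step of $s$ adds a cycle of class $U_i$ until the available room is exhausted, and then removes them. Since each cycle has length $\len(U_i)$ and they are disjoint, at most $\lfloor N/\len(U_i)\rfloor=H_i$ fit, which gives a symmetric tent of height $H_i$ and width $2H_i$. Consecutive excursions correspond to consecutive rays because the transition from class $U_i$ to $U_{i+1}$ must pass through a state with no closed cycles. At that moment the class of the open path, read as a slope, rotates past the ray, and in the first quadrant the rays are met in angular order from the $y$-axis to the $x$-axis. Consecutive primitive vectors $U_i,U_{i+1}$ form a basis of $\Lambda_D$ (determinant $q$), which keeps the rotation monotone and rules out skipped rays.

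\textbf{Closing the count.} We still need to confirm $z_k=N-R$, that is, $L+2\sum H_i+R=N$. We plan to get this by a pairing count. Each $s$ with an increment of $-1$ is paired with a residue class via the transposition $\sigma_s$, and the total number of $\pm1$ steps equals $N$. Alternatively, one can count lattice points $(x,y)$ with $x,y\ge1$ and $x+y\le N$ on the internal rays, computing $\sum_i\lfloor N/\len(U_i)\rfloor$ two ways. Once the profile is established, \eqref{eq:profile-zeros} is immediate, since the zeros of a piecewise tent function with slopes $\pm1$ are exactly the $z_i$ and its value is the distance to them. Together with \eqref{eq:cycle-test}, this gives $Z_q(D)-1=\{z_i\}$.

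We expect the interior-excursion step to be the main obstacle: establishing, through the torus surgery, that each excursion has exactly height $H_i$ and that no rays are skipped.
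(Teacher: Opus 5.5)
\emph{Verdict: the proposal has a genuine gap in the interior-excursion step.}

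\textbf{What is sound.} Your setup agrees with the paper's. The unit steps follow from $\tau_{s+1}=\tau_s(0\;s)$, the two boundary segments come from Lemma~\ref{lem:endpoints} and reflection, and all closed components at one cut share a primitive class $U$ with $\len(U)\le N$.

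\textbf{First gap: the shape of each excursion.} You rightly call the interior-excursion step the heart of the matter, but it is asserted rather than proved. Nothing in your argument excludes a positive local minimum, so within one class the count could rise, fall and rise again. The phrase ``adds cycles until the available room is exhausted'' has no mechanism behind it.

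The paper's key tool is the one-crossing Lemma~\ref{lem:local-crossing}. Take a local minimum $j_D(s-1)=j_D(s)+1=j_D(s+1)$. The closed cycle absorbed on one side and the closed cycle split off on the other contain the two outgoing grid edges (the $D$-step and the $(D+1)$-step) of one common vertex. After translating one graph by $\epsilon(e_1-e_2)$, the two graphs cross transversally exactly once, with positive sign, so $\det(U_-,U_+)=q$. This yields two facts at once:
\begin{itemize}
\item No closed component survives at a local minimum, since it would be parallel to both $U_-$ and $U_+$. Hence each excursion is a single tent.
\item The class rotates strictly in one direction across each zero. Hence no ray is visited twice.
\end{itemize}
Your substitute is that consecutive primitive vectors form a basis of $\Lambda_D$. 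This is true: the triangle $0,U_i,U_{i+1}$ contains no other point of $\Lambda_D$, so Pick's theorem gives determinant $q$. But it is a statement about the rays, not about the sequence of classes the profile actually visits. It neither orients the rotation nor rules out a skipped or revisited ray.

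\textbf{Second gap: full heights and no skipped rays.} Disjointness only gives the upper bound $h(U)\le\lfloor N/\len(U)\rfloor$ for the height actually reached. It does not show that $H_i$ is attained, or that every internal ray appears.

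In the paper this is forced by the exact count $N=L+R+2\sum_U\lfloor N/\len(U)\rfloor$ over all internal primitive rays (Lemma~\ref{lem:residue-count}). Its proof runs as follows:
\begin{itemize}
\item Each length $1\le l<q$ carries at most one point of $\Lambda_D$ in the closed quadrant, namely the one with $y\equiv-Dl$.
\item Exactly $d_0-1$ lengths give $x$-axis points and $d_1-1$ give $y$-axis points.
\item The map $l\mapsto q-l$ pairs the remaining lengths so that exactly half of them give internal points.
\end{itemize}
The profile's own duration is $N=L+R+2\sum_{\text{visited}}h(U)$, with each ray visited at most once and $h(U)\le\lfloor N/\len(U)\rfloor$. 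The identity then forces every internal ray to occur, at full height.

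So the count is not an after-the-fact check of $z_k=N-R$, as your last paragraph treats it. It is the step that proves the heights and the absence of skipped rays. Your ``pairing via $\sigma_s$'' does not supply it: the total number of unit steps is trivially $N$ and carries no information about heights.
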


This is the consecutive-generator specialization of the cycle-class theorem in~\cite[Theorem~7.2 and Proposition~4.12]{CW}; its zero set is the Hamiltonian-use-count parametrization recalled in~\cite[Section~3B]{DMM}. Appendix~\ref{app:profile} proves the proposition from cut surgery, a local torus intersection, and a residue count. Thus the lattice parametrization is established within the present paper. Figure~\ref{fig:lattice} illustrates the proposition.
\begin{figure}[tb]
\centering
\begin{minipage}[t]{0.40\textwidth}
\centering
\begin{tikzpicture}[x=0.2786cm,y=0.2786cm,font=\scriptsize,>=Stealth]
\fill[black!5] (0,0)--(14,0)--(0,14)--cycle;
\draw[->] (0,0)--(15.6,0) node[right] {$x$};
\draw[->] (0,0)--(0,15.6) node[above] {$y$};
\draw[gray] (14,0)--(0,14);
\node[below] at (14,0) {$N$};
\node[left] at (0,14) {$N$};
\draw[gray] (0,0)--(3.500,10.500);
\node[above=1pt,inner sep=1pt] at (3.500,10.500) {$U_1$};
\draw[gray] (0,0)--(7.538,6.462);
\node[above right=1pt,inner sep=1pt] at (7.538,6.462) {$U_2$};
\draw[gray] (0,0)--(9.333,4.667);
\node[above right=1pt,inner sep=1pt] at (9.333,4.667) {$U_3$};
\draw[gray] (0,0)--(11.000,3.000);
\node[right=1pt,inner sep=1pt] at (11.000,3.000) {$U_4$};
\foreach \p in {(1,3),(2,6),(3,9),(7,6),(6,3),(11,3),(5,0),(10,0)} \fill \p circle (1.6pt);
\node[below left] at (0,0) {$0$};
\end{tikzpicture}
\par\smallskip {\small (a) The points of $\Lambda_{3}$ with $x+y\le N$}
\end{minipage}
\hfill
\begin{minipage}[t]{0.56\textwidth}
\centering
\begin{tikzpicture}[x=0.4071cm,y=0.6667cm,font=\scriptsize,>=Stealth]
\draw[->] (1,0)--(16.596,0) node[right] {$s$};
\draw[->] (1,0)--(1,3.525) node[above] {$j_D(s)$};
\draw[thick] plot coordinates {(1,0) (2,1) (3,2) (4,3) (5,2) (6,1) (7,0) (8,1) (9,0) (10,1) (11,0) (12,1) (13,0) (14,1) (15,2)};
\fill (1,0) circle (1.6pt);
\node[below=4pt] at (1,0) {1};
\fill (7,0) circle (1.6pt);
\node[below=4pt] at (7,0) {7};
\fill (9,0) circle (1.6pt);
\node[below=4pt] at (9,0) {9};
\fill (11,0) circle (1.6pt);
\node[below=4pt] at (11,0) {11};
\fill (13,0) circle (1.6pt);
\node[below=4pt] at (13,0) {13};
\node[above] at (4.0,3) {$U_1$};
\node[above] at (8.0,1) {$U_2$};
\node[above] at (10.0,1) {$U_3$};
\node[above] at (12.0,1) {$U_4$};
\node[above left] at (15,2) {$R$};
\end{tikzpicture}
\par\smallskip {\small (b) The cycle profile}
\end{minipage}
\caption{Proposition~\ref{prop:profile} for $q=15$ and $D=3$. The internal rays $U_1,\ldots,U_{4}$ carry $3,1,1,1$ lattice points, the positive $x$-axis carries $R=2$, and the positive $y$-axis carries $L=0$. Each internal ray contributes one triangular excursion of $j_D$ whose height is its multiplicity; the zeros are $Z_{15}(3)=\{1,7,9,11,13\}$.}\label{fig:lattice}
\end{figure}

To compare conventions: after translation, the classical standard path with $d$ uses of the second generator is our cut path with $s=q-d$. If $m=\ord_{\Zq}(D)$, $n=q/m=d_0$, and $n(D+1)=eD$, the classical lattice coordinates map to~\eqref{eq:lattice} by
\[
 (X,Y)\longmapsto(mX-eY,nY).
\]
Its determinant is $mn=q$, and its length form becomes $x+y$. Reversing the use count reverses the ray order. The $y$- and $x$-axis multiplicities are therefore $L$ and $R$, respectively.

The distinguished cycle of $Q_{D,s}$ is the one containing zero. Its length in the $i$th excursion is $q-j_D(s)\len(U_i)$. At a peak, the closed cycles occupy as many vertices as possible, leaving at least the vertex $0$ to the distinguished cycle. For example, when $q=9,D=2,s=7$,
\[
 Q_{2,7}=(0)(1\ 3\ 5\ 7)(2\ 4\ 6\ 8),
\]
so the distinguished cycle has length one and the other two have length four.
Moving toward a nearest zero in~\eqref{eq:profile} absorbs one closed component at each step.

\subsection{Sector filling and the length inequality}
The ordered weights of the cut sequence $Z_q(D)-1$ are the ray multiplicities
\[
 L,H_1,\ldots,H_k,R,
\]
listed from the $y$-axis to the $x$-axis.
The multiplicity between two weights is the sum of the weights strictly between them.

\begin{lemma}[Sector filling]\label{lem:sector}
For two distinct rays of positive multiplicities $p,t$, the intervening multiplicity $M$ satisfies
\begin{equation}\label{eq:sector-exact}
 M\ge\Theta(p,t)
 =\frac{(p-1)(t-1)+\gcd(p,t)-1}{2}
 \ge\frac{(p-1)(t-1)}2.
\end{equation}
The assertion includes either or both boundary rays when their multiplicities are positive.
\end{lemma}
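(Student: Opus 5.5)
The plan is to count lattice points of $\Lambda_D$ directly. The key observation is that every weight in the list $L,H_1,\ldots,H_k,R$ equals the number of nonzero points of $\Lambda_D$ on the corresponding ray with $\len\le N$.

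\emph{Weights as point counts.} For an internal ray with primitive vector $U_i$, the lattice points on it are the multiples $mU_i$ with $m\ge1$. Those of length at most $N$ number exactly $\lfloor N/\len(U_i)\rfloor=H_i$. An internal ray that is not retained has primitive length exceeding $N$, so it carries no such points. For the $y$-axis, $(0,y)\in\Lambda_D$ exactly when $(D+1)y\equiv0\pmod q$, that is, when $q/d_1$ divides $y$. The number of such $y$ with $1\le y\le N=q-1$ is $d_1-1=L$, and symmetrically the $x$-axis carries $d_0-1=R$ points. Consequently, for two rays in the closed first quadrant, the intervening multiplicity $M$ equals the number of points of $\Lambda_D$ with $\len\le N$ lying strictly inside the open sector between them.

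\emph{Producing interior points.} Let the two rays have primitive vectors $U,V$ and multiplicities $p,t\ge1$. Then $pU$ and $tV$ lie in $\Lambda_D$ with length at most $N$, so $\len(U)\le N/p$ and $\len(V)\le N/t$. For integers $i,j\ge1$ with $i/p+j/t\le1$, the vector $iU+jV$ lies in $\Lambda_D$ and lies strictly inside the sector, because both coefficients are positive and $U,V$ are independent. Since $\len$ is linear and nonnegative on the first quadrant, its length satisfies
\[
 \len(iU+jV)=i\len(U)+j\len(V)\le N\left(\frac ip+\frac jt\right)\le N.
\]
Linear independence makes the map $(i,j)\mapsto iU+jV$ injective. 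Hence $M$ is at least the number of integer pairs $i,j\ge1$ with $ti+pj\le pt$.

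\emph{The count.} These pairs are the interior lattice points of the triangle with vertices $(0,0),(p,0),(0,t)$, together with the lattice points on the hypotenuse other than its endpoints. The triangle has area $pt/2$ and $p+t+\gcd(p,t)$ boundary lattice points, so Pick's theorem gives
\[
 I=\frac{pt-p-t-\gcd(p,t)}{2}+1
\]
interior points. The open hypotenuse contributes $\gcd(p,t)-1$ further points. Adding,
\[
 I+\gcd(p,t)-1=\frac{(p-1)(t-1)+\gcd(p,t)-1}{2}=\Theta(p,t).
\]
The final inequality $\Theta(p,t)\ge(p-1)(t-1)/2$ holds because $\gcd(p,t)\ge1$.

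The only delicate step is the first one: checking that the axis weights $L,R$ agree with the axis point counts, and that a discarded internal ray carries no points of length at most $N$. This is what lets the statement cover boundary rays as well as internal ones. The rest is the Pick computation.
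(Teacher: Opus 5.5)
Your proposal is correct and is essentially the paper's proof: the points $iU+jV$ with $i,j\ge1$ and $i/p+j/t\le1$ are distinct points of $\Lambda_D$, of length at most $N$, lying strictly inside the sector, and $M$ counts all such points. The only differences are that you count these pairs with Pick's theorem, whereas the paper pairs the interior points of $[0,p]\times[0,t]$ across the center, and that you verify explicitly that $L$ and $R$ are the axis point counts, which the paper leaves implicit.
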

\begin{proof}
Let $U,V$ be the primitive vectors. Their lengths are at most $N/p,N/t$. Every positive pair $i,j$ with $i/p+j/t\le1$ supplies a distinct lattice point $iU+jV$ strictly between the rays and of length at most $N$. The intervening multiplicity counts all these points, including distinct multiples on the same primitive ray. The number of such pairs is $\Theta(p,t)$: pairing the interior points of $[0,p]\times[0,t]$ across its center leaves $\gcd(p,t)-1$ points on the diagonal $i/p+j/t=1$ and gives the formula.
\end{proof}

\begin{lemma}[Length inequality]\label{lem:length}
Every internal weight $H$ of $Z_q(D)-1$ satisfies
\begin{equation}\label{eq:length}
 (L+R+2)H\le N.
\end{equation}
\end{lemma}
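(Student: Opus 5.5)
The plan is to bound the length of each internal primitive vector from below by $d_0+d_1=L+R+2$, and then use the definition $H=\lfloor N/\len(U)\rfloor$. Since $H\,\len(U)\le N$ holds by definition of the floor, it will suffice to prove
\[
 \len(U)\ge d_0+d_1\qquad\text{for every internal vector }U=U_i,
\]
because then $(L+R+2)H=(d_0+d_1)H\le \len(U)H\le N$.

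Write $U=(x,y)$. Since $U$ lies on an internal ray of the first quadrant, strictly between the two axes, both coordinates are positive integers: $x\ge1$ and $y\ge1$. The membership condition $Dx+(D+1)y\equiv0\pmod q$ will be reduced modulo the two endpoint divisors, as follows.

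Modulo $d_1=\gcd(q,D+1)$ we have $D\equiv-1$ and $D+1\equiv0$. The condition therefore becomes $-x\equiv0\pmod{d_1}$, so $d_1\mid x$. Modulo $d_0=\gcd(q,D)$ we have $D\equiv0$ and $D+1\equiv1$. The condition becomes $y\equiv0\pmod{d_0}$, so $d_0\mid y$. Positivity then forces $x\ge d_1$ and $y\ge d_0$, hence $\len(U)=x+y\ge d_0+d_1$. This gives the claim and therefore~\eqref{eq:length}.

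I do not expect a real obstacle. The only points to check are two details. First, the internal rays exclude both axes, so both coordinates are genuinely positive. Second, primitivity plays no role, since the bound holds for every positive lattice point. As a consistency check, the same congruences show that the boundary primitive vectors are $(0,q/d_1)$ and $(q/d_0,0)$. Their multiplicities are $\lfloor N d_1/q\rfloor=d_1-1=L$ and $\lfloor N d_0/q\rfloor=d_0-1=R$, in agreement with Proposition~\ref{prop:profile}.
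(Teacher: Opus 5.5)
Your proof is correct and follows the same argument as the paper. The paper likewise reduces the defining congruence modulo $d_1$ and $d_0$ to get $d_1\mid x$ and $d_0\mid y$. It then concludes $\len(U)\ge d_0+d_1=L+R+2$ and uses $H\,\len(U)\le N$.
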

\begin{proof}
For $(x,y)\in\Lambda_D$ with $x,y>0$, reducing the defining congruence modulo $d_1$ and $d_0$ gives $d_1\mid x$ and $d_0\mid y$. Hence
\[
 \len(x,y)\ge d_0+d_1=L+R+2.
\]
Now use $H\len(U)\le N$.
\end{proof}

The two estimates have complementary meanings. Sector filling separates large excursions from one another. The length inequality limits each excursion by the sum of the two endpoint divisors. The comparison argument below combines the quadratic lower bound in~\eqref{eq:sector-exact} with~\eqref{eq:length}.

\section{Confinement of separated sequences}\label{sec:confinement}
We state the geometric result independently of its lattice realization. This separates the combinatorial comparison from the arithmetic restriction on primitive lengths.

\begin{definition}\label{def:sector-filled}
Let $S=\{s_0<\cdots<s_k\}\subseteq\{0,\ldots,N\}$, with $k\ge1$, and suppose its gaps have positive integer half-lengths $h_i=(s_i-s_{i-1})/2$. Its boundary weights are $L=s_0$ and $R=N-s_k$; its ordered weights are
\[
 L,h_1,\ldots,h_k,R.
\]
We call $S$ \emph{sector-filled} if any two distinct positions carrying positive weights $p,t$ satisfy
\begin{equation}\label{eq:weak-sector}
 M\ge\frac{(p-1)(t-1)}2,
\end{equation}
where $M$ is the sum of the intervening weights. We say that it satisfies the length inequality if
\begin{equation}\label{eq:abstract-length}
 (L+R+2)h_i\le N\qquad(1\le i\le k).
\end{equation}
\end{definition}

The definition isolates the quadratic consequence of sector filling. Since $M$ is integral,~\eqref{eq:weak-sector} includes rounding up. Lemmas~\ref{lem:sector} and~\ref{lem:length} show that the normalized cut spectra satisfy both conditions.

\subsection{Sector filling bounds the number of alternations}
Suppose $S,T$ are separated by $\delta=\dist(S,T)\ge2$. Label the elements of their ordered union by their set of origin. A \emph{run} is a maximal block of consecutive elements with the same label. Any internal gap containing a point of the other sequence has half-length at least $\delta$.

\begin{lemma}[Five runs]\label{lem:five-runs}
The ordered union of two separated sector-filled sequences has at most four runs.
\end{lemma}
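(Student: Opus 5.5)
The plan is to argue by contradiction. Assume the ordered union has at least five runs, and fix five consecutive runs. After interchanging the roles of $S$ and $T$ if necessary, they have the pattern $S_1T_2S_3T_4S_5$, where $S_1,S_3,S_5$ are runs from $S$ and $T_2,T_4$ are runs from $T$. The goal is to play sector filling for $S$ against sector filling for $T$. Each application will give a lower bound on an intervening multiplicity. Each such multiplicity is the half-span of a run, and that run is trapped inside a gap of the other sequence.

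First, the weights involved. Since $T_2$ and $T_4$ lie between elements of $S$, each sits inside an internal gap of $S$. These two gaps are distinct, because $S_3$ lies between them. By the separation remark before the lemma, their half-lengths $p,t$ satisfy
\[
 p\ge v+\delta,\qquad t\ge v'+\delta,
\]
where $v,v'$ are half the spans of $T_2$ and $T_4$: the gap has to contain the whole run plus distance $\delta$ on each side. Likewise $S_3$ lies in one internal gap of $T$, of half-length $h\ge w+\delta$, with $w$ the half-span of $S_3$.

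The weight of $T$ immediately to the left of the gaps inside $T_2$ is either the boundary weight $L_T$ or a gap of $T$ containing points of $S$ (at least those of $S_1$). In either case it is at least $\delta$, and in particular positive. Symmetrically, the weight of $T$ immediately to the right of $T_4$ is at least $\delta$.

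Next, apply \eqref{eq:weak-sector} three times.
\begin{itemize}
 \item For $S$ with the two gaps of half-lengths $p,t$: the intervening weights are exactly the gaps inside $S_3$, so $w\ge (p-1)(t-1)/2$.
 \item For $T$ with the weight left of $T_2$ and the gap $h$: the intervening weights are the gaps of $T_2$, so $v\ge(\delta-1)(h-1)/2$.
 \item Symmetrically, $v'\ge(\delta-1)(h-1)/2$.
\end{itemize}

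Finally, combine the estimates using $\delta\ge2$. This gives $p,t\ge (h-1)/2+2=(h+3)/2$, hence $w\ge (h+1)^2/8$. Then $h\ge w+2$ forces
\[
 8h\ge h^2+2h+17,\qquad\text{that is,}\qquad h^2-6h+17\le0.
\]
This is impossible, since the discriminant $36-68$ is negative.

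The step I expect to need the most care is the bookkeeping. One must check that each intervening multiplicity equals the half-span of a run: the intervening weights are consecutive gaps, and they telescope to half the span. One must also check that every weight fed into \eqref{eq:weak-sector} is positive and at a distinct position, including the boundary-weight case $L_T\ge s_0+\delta$. The algebra itself is routine.
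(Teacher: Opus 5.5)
Your argument is correct and is essentially the paper's proof: you take the same two $S$-gaps around the $T$-runs and the same $T$-gap around the middle $S$-run, apply sector filling in $S$ between the two gaps and in $T$ between the preceding (boundary or gap) weight and the middle gap, and tie the estimates together with the separation bound. The only difference is cosmetic: the paper assumes $\alpha\le\gamma$, uses the $T$-side estimate once, and reduces to $(\alpha-3)^2+6\delta-10\le0$, whereas you use that estimate at both ends symmetrically and reduce to $(h-3)^2+8\le0$.
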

\begin{proof}
Suppose that five consecutive runs have labels
\[
 S_0,\quad T_0,\quad S_1,\quad T_1,\quad S_2.
\]
The $S$-gaps containing $T_0,T_1$ have respective half-lengths $\alpha,\gamma$. The $T$-gap containing $S_1$ has half-length $\beta$. Reversing the order if necessary, which interchanges $T_0$ and $T_1$, assume $\alpha\le\gamma$.

Let $M$ be the multiplicity between the $\alpha$- and $\gamma$-gaps of $S$. The first and last points of $S_1$ span length $2M$. The two ends of the $\beta$-gap lie at least $\delta$ outside those points. Hence
\begin{equation}\label{eq:beta-lower}
 \beta\ge M+\delta
 \ge\frac{(\alpha-1)(\gamma-1)}2+\delta
 \ge\frac{(\alpha-1)^2}2+\delta.
\end{equation}

In the $T$-sequence, let $p$ be the weight immediately preceding the run $T_0$: it is its preceding gap weight, or the left boundary weight if $T_0$ is the first $T$-run. In the former case that gap contains $S_0$, so $p\ge\delta$. In the latter case $\min T\ge\delta$, again giving $p\ge\delta$. The multiplicity between $p$ and the $\beta$-gap is half the span of $T_0$, at most $\alpha-\delta$. Thus
\[
 \frac{\beta-1}{2}
 \le\frac{(p-1)(\beta-1)}2
 \le\alpha-\delta,
\]
so $\beta\le2\alpha-2\delta+1$. Combining this with~\eqref{eq:beta-lower} yields
\[
 (\alpha-3)^2+6\delta-10\le0.
\]
Its left side is at least two, a contradiction.
\end{proof}

\begin{figure}[tb]
\centering
\begin{tikzpicture}[x=1cm,y=1cm,>=Stealth,font=\small]
 \draw (0,0)--(12.1,0);
 \foreach \x/\lab in {0.8/$S_0$,3.2/$T_0$,5.7/$S_1$,8.2/$T_1$,10.7/$S_2$}{
  \draw[very thick] (\x-0.55,0)--(\x+0.55,0);
  \node[below=5pt] at (\x,0) {\lab};
 }
 \draw[decorate,decoration={brace,amplitude=4pt}] (1.35,0.35)--(5.15,0.35) node[midway,above=5pt] {$2\alpha$};
 \draw[decorate,decoration={brace,amplitude=4pt}] (6.25,0.35)--(10.15,0.35) node[midway,above=5pt] {$2\gamma$};
 \draw[decorate,decoration={brace,mirror,amplitude=4pt}] (3.75,-0.65)--(7.65,-0.65) node[midway,below=5pt] {$2\beta$};
 \draw[densely dashed] (5.15,0.05)--(5.15,0.92);
 \draw[densely dashed] (6.25,0.05)--(6.25,0.92);
 \node[above] at (5.7,0.9) {$2M$};
\end{tikzpicture}
\caption{Five alternating runs. The $\beta$-gap contains the middle $S$-run. Sector filling between the $\alpha$- and $\gamma$-gaps makes $\beta$ large, while sector filling on the left in $T$ gives the incompatible upper bound. Distances between runs are schematic.}\label{fig:five-runs}
\end{figure}

An immediate consequence already covers nested ranges: if $\min S<\min T\le\max T<\max S$, their merged order starts and ends in $S$. It has an odd number of runs, so Lemma~\ref{lem:five-runs} leaves exactly three. Thus $T$ lies in a single $S$-gap. For nested hulls, sector filling alone yields confinement.

\subsection{The length inequality excludes four runs}
The length inequality is preserved when the ambient interval is restricted to the convex hull of the two sequences. Indeed, if $c$ is removed at the left and $d$ at the right, replace
\[
 N\text{ by }N'=N-c-d,\qquad
 L,R\text{ by }L-c,R-d.
\]
Each boundary weight decreases, preserving~\eqref{eq:weak-sector}. For an unchanged internal weight $h$,
\begin{equation}\label{eq:hull-restriction}
 N'-(L+R-c-d+2)h
 =N-(L+R+2)h+(c+d)(h-1)\ge0.
\end{equation}

\begin{lemma}[Four runs]\label{lem:four-runs}
Two separated sector-filled sequences satisfying the length inequality cannot have exactly four runs.
\end{lemma}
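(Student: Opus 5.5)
The plan is to fix the interlacing configuration and play the two length inequalities against the sector-filling bounds at the two outer ends. Up to reversing the order (which swaps the roles of $S$ and $T$ and of left and right), four runs means the merged order reads $S_0,T_0,S_1,T_1$, so $\min S<\min T$ and $\max S<\max T$. By the hull-restriction remark around~\eqref{eq:hull-restriction}, we may restrict the ambient interval to $[\min S,\max T]$; both sequences remain sector-filled and satisfy the length inequality with the new $N'$. After this restriction $L_S=0$ and $R_T=0$. The two weights that survive are $\lambda:=L_T=\min T\ge\delta$ and $\rho:=R_S=N'-\max S\ge\delta$. Let $\alpha$ be the half-length of the $S$-gap containing $T_0$ and $\beta$ that of the $T$-gap containing $S_1$; as in the proof of Lemma~\ref{lem:five-runs}, $\alpha,\beta\ge\delta$.

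\emph{Sector filling at the outer ends.} In $T$, apply~\eqref{eq:weak-sector} to the pair $(\lambda,\beta)$. The intervening multiplicity is half the span of $T_0$, and $T_0$ sits inside the $\alpha$-gap at distance at least $\delta$ from both of its ends, so
\[
 \tfrac{(\lambda-1)(\beta-1)}2\le\alpha-\delta .
\]
Symmetrically, in $S$ apply it to the pair $(\alpha,\rho)$. The intervening multiplicity is half the span of $S_1$, which lies inside the $\beta$-gap, so
\[
 \tfrac{(\alpha-1)(\rho-1)}2\le\beta-\delta .
\]

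\emph{Length inequalities and total length.} The length inequality gives $(\rho+2)\alpha\le N'$ for $S$ and $(\lambda+2)\beta\le N'$ for $T$. To use these, I would bound $N'$ from above by decomposing the hull along the four runs. The stretch $[0,\max S_0]$ has length at most $\lambda-\delta$. Next comes the $\alpha$-gap, of length $2\alpha$. The span of $S_1$ is at most $2(\beta-\delta)$. Finally $\rho$ closes the interval. Alternatively, read the same interval from the $T$ side as $\lambda$, the span of $T_0$, the $\beta$-gap, and the span of $T_1$. Combining the two readings should give
\[
 N'\le\lambda+2\alpha+\rho+O(\beta)\quad\text{and}\quad N'\le\lambda+2\beta+O(\alpha).
\]
Substituting into the length inequalities yields roughly $\rho\alpha\lesssim\lambda+\rho+2\beta$ and $\lambda\beta\lesssim\lambda+\rho+2\alpha$. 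Together with the two quadratic sector bounds, this should force all of $\alpha,\beta,\lambda,\rho$ to be small, and then contradict $\delta\ge2$.

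\emph{Main obstacle.} The hard step is the bookkeeping of $N'$. The last run $T_1$ is not controlled by any gap of $S$, since it lies to the right of $\max S$; its span is bounded only through $\rho$ (it lies in $[\max S+\delta,N']$). Likewise $S_0$ is controlled only through $\lambda$. So I would first try expressing $N'=\max S+\rho$ with $\max S\le(\lambda-\delta)+2\alpha+2(\beta-\delta)$. I would then plug this into $(\lambda+2)\beta\le N'$ and use $\lambda,\rho\ge\delta\ge2$. If the resulting inequality does not close directly, I would split into the cases $\alpha\le\beta$ and $\beta\le\alpha$ and use the sector bound on the smaller variable to dominate the larger one, in analogy with the square-completion $(\alpha-3)^2+6\delta-10\le0$ of Lemma~\ref{lem:five-runs}. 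The aim is an inequality of the form (positive square) $+\,c\delta-c'\le0$, which is impossible for $\delta\ge2$.
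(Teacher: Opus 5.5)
Your setup is the paper's: restrict to the joint hull, normalize the runs as $S_0,T_0,S_1,T_1$, and decompose the hull to get $N'\le\lambda+\rho+2\alpha+2\beta-3\delta$, which is exactly \eqref{eq:four-upper}. Your two outer-end sector bounds are also correct. The gap is the last step. You never derive the contradiction, and the route you indicate for it cannot succeed. That route inserts the bound into $(\lambda+2)\beta\le N'$ alone and adds the sector bounds and a case split, and this combination is too weak.

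Concretely, take $S=\{0,10,12,14\}$ and $T=\{2,4,6,8,16\}$ in $\{0,\ldots,16\}$. Both are sector-filled, they are separated by $\delta=2$, they have exactly four runs, and $(\alpha,\beta,\lambda,\rho)=(5,4,2,2)$. They satisfy your upper bound on $N'$ (with equality), both of your sector bounds, and $\lambda,\rho\ge\delta\ge2$. Moreover $T$ satisfies the length inequality, since $4\cdot4=16$. Only the length inequality for $S$ fails, since $4\cdot5=20>16$. So any closing argument must use both length inequalities at the same time.

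Once you do that, the argument closes at once and needs no sector filling. The two inequalities you wrote ``roughly'' are exactly
\[
\rho\alpha\le\lambda+\rho+2\beta-3\delta,\qquad \lambda\beta\le\lambda+\rho+2\alpha-3\delta.
\]
They come from $(\rho+2)\alpha\le N'$ and $(\lambda+2)\beta\le N'$ together with your bound on $N'$. Their sum rearranges to
\[
(\rho-2)(\alpha-2)+(\lambda-2)(\beta-2)\le 8-6\delta<0,
\]
which is impossible because $\alpha,\beta,\lambda,\rho\ge\delta\ge2$. This is precisely the paper's proof. It uses only separation and the two length inequalities; sector filling does its work in the five-run lemma, not here.
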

\begin{proof}
Restrict to the convex hull of $S\cup T$ as above and rename its length $N$. Exchange $S,T$ if necessary so the runs are $S_0,T_0,S_1,T_1$. Then
\[
 \min S=0,\quad \max S=N-r,\qquad
 \min T=\ell,\quad \max T=N,
\]
where $\ell,r\ge\delta\ge2$. Let $[x,y]$ be the $S$-gap containing $T_0$, of half-length $\alpha$, and $[z,w]$ the $T$-gap containing $S_1$, of half-length $\beta$. Both half-lengths are at least $\delta$.

At the three transitions between runs, separation gives
\[
 x\le\ell-\delta,\qquad y-z\ge\delta,\qquad N-w\le r-\delta.
\]
Since $y=x+2\alpha$ and $w=z+2\beta$,
\begin{equation}\label{eq:four-upper}
 N=x+2\alpha+2\beta-(y-z)+(N-w)
 \le\ell+r+2\alpha+2\beta-3\delta.
\end{equation}
On the other hand, the two length inequalities imply
\begin{align*}
 2N&\ge(r+2)\alpha+(\ell+2)\beta\\
   &=4(\alpha+\beta)+(r-2)\alpha+(\ell-2)\beta\\
   &\ge4(\alpha+\beta)+2(\ell+r)-8.
\end{align*}
Twice~\eqref{eq:four-upper} contradicts this, because $6\delta-8>0$.
\end{proof}

\begin{theorem}[Confinement]\label{thm:confinement}
Let $S,T\subseteq\{0,\ldots,N\}$ be sector-filled sequences satisfying the length inequality. Suppose their convex hulls overlap. If $\delta=\dist(S,T)\ge2$, then, after exchanging the sequences if necessary, all of $T$ lies in one gap of $S$. More precisely, for two consecutive points $x,x+2H$ of $S$,
\begin{equation}\label{eq:confinement}
 x+\delta\le\min T\le\max T\le x+2H-\delta.
\end{equation}
\end{theorem}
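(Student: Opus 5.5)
The plan is to count runs in the ordered union of $S$ and $T$. Since $\delta\ge2$, the sets are disjoint, so the union splits into maximal runs with alternating labels. Lemma~\ref{lem:five-runs} shows there are at most four runs. Lemma~\ref{lem:four-runs} excludes exactly four. It remains to exclude one and two runs using the overlap hypothesis, and then to read off~\eqref{eq:confinement} from the three-run configuration.

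A single run cannot occur, since both sequences are nonempty. With two runs, say $S_0,T_0$, every point of $S$ precedes every point of $T$. Then $\max S<\min T$, so the convex hulls $[\min S,\max S]$ and $[\min T,\max T]$ are disjoint, against the hypothesis. Hence there are exactly three runs. After exchanging $S$ and $T$ if necessary, the labels are $S_0,T,S_1$, so $T$ forms a single run with $S$-points on both sides.

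Let $x$ be the last point of $S_0$ and $x+2H$ the first point of $S_1$. These are consecutive points of $S$, since no $S$-point lies between them, and $H$ is the half-length of that gap. All of $T$ lies strictly between $x$ and $x+2H$. The distance condition $\dist(S,T)=\delta$ then gives $\min T-x\ge\delta$ and $x+2H-\max T\ge\delta$, which is~\eqref{eq:confinement}.

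The substantive work is already done in the two run lemmas. The only point needing care is how ``overlapping convex hulls'' is used: it must rule out the two-run case, including the boundary situation where the hulls merely touch. Touching is impossible here because the sets are disjoint with distance at least two. One should also check that the restriction to the hull inside Lemma~\ref{lem:four-runs} is legitimate, which is the computation~\eqref{eq:hull-restriction}. Once these are in place, the theorem is a direct assembly.
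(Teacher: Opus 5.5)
Your proposal is correct and follows the paper's proof: overlapping hulls, together with disjointness, exclude one or two runs, and Lemmas~\ref{lem:five-runs} and~\ref{lem:four-runs} then leave exactly three runs, after which separation across the enclosing $S$-gap gives~\eqref{eq:confinement}. The only difference is that you spell out the exclusion of two runs, which the paper compresses into a single clause.
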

\begin{proof}
Overlapping hulls and separation give at least three runs. Lemma~\ref{lem:five-runs} gives at most four, and Lemma~\ref{lem:four-runs} excludes four. With three runs, the middle run is the whole of one sequence and is enclosed by consecutive points of the other. Separation gives~\eqref{eq:confinement}.
\end{proof}

\begin{remark}\label{rem:length-needed}
The two hypotheses rule out different configurations. Sector filling alone forces a separated nested sequence into one gap. The length inequality excludes interlacing. For example, at $N=10$ the sequences
\[
 S=\{0,6,8\},\qquad T=\{2,4,10\}
\]
satisfy even the exact sector bound~\eqref{eq:sector-exact}, but have distance two and four alternating runs. Their largest weight is three and their boundary sum is two, so $(L+R+2)h=12>N$. Thus the length inequality distinguishes this abstract interlacing from a cut spectrum.
\end{remark}

\subsection{Proof of the spectral classification}
\begin{proof}[Proof of Theorem~\ref{thm:spectra}]
Assume first that $A,C$ are nondegenerate and that their spectral distance is $\delta\ge2$. Their endpoint divisors are proper divisors of $q$, so the convex hulls of $Z_q(A)-1$ and $Z_q(C)-1$ overlap. Apply Theorem~\ref{thm:confinement}; exchange $A,C$ so that the latter spectrum is contained in one gap of the former. Put
\[
 a=\gcd(q,A),\quad b=\gcd(q,A+1),\qquad
 c_0=\gcd(q,C),\quad d=\gcd(q,C+1).
\]
Comparing endpoints gives $c_0-a\ge\delta$ and $d-b\ge\delta$, hence $c_0,d\ge3$. Since $\gcd(c_0,d)=1$ and $c_0d\mid q$,
\begin{equation}\label{eq:divisor-sum}
 c_0+d\le\frac{c_0d}{3}+3\le\frac q3+3;
\end{equation}
this is $(c_0-3)(d-3)\ge0$.

Let $H$ be the height of the containing gap. Its width is at least
\[
 2H\ge q+1-c_0-d+2\delta\ge\frac{2q}{3}+2.
\]
Since $H=\lfloor(q-1)/\len(U)\rfloor$, the corresponding positive primitive vector has length less than three. It is therefore $(1,1)$, and its lattice congruence gives $2A+1=0\pmod q$. Consequently $q$ is odd and $A=(q-1)/2$.

For this coefficient the two steps are opposite units. A simple path in the resulting bidirected cycle cannot change direction, because that would revisit the preceding vertex. Hence $Z_q((q-1)/2)=\{1,q\}$. Lemma~\ref{lem:endpoints} gives~\eqref{eq:central-distance}. In every other nondegenerate case the distance is at most one, and~\eqref{eq:parity} gives the exact parity statement. Finally, the two degenerate spectra in Lemma~\ref{lem:endpoints} give~\eqref{eq:degenerate-distance}.
\end{proof}

The exceptional coefficient is characterized by the unique positive lattice direction of total length two. This also explains why a merely large internal multiplicity is insufficient: the containing gap must span the entire effective interval of another spectrum, and the endpoint divisors force its primitive length all the way down to two.

\section{Exceptional residues and prescribed extension}\label{sec:failure}
We turn the spectral classification into a criterion for the return residue $u_E$. Degenerate coefficients are allowed throughout this section.

\begin{proposition}\label{prop:failure}
For every $q\ge2$ and $B\in\Zq$,
\begin{equation}\label{eq:failure-identity}
 \F(q,B)=\{u\in\Zq:\dist(Z_q(u),Z_q(u-B))>1\},
\end{equation}
where $\F(q,B)$ is given by~\eqref{eq:Fminus}--\eqref{eq:Fcentral}.
\end{proposition}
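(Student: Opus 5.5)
The plan is to compute the set on the right of \eqref{eq:failure-identity} directly from Theorem~\ref{thm:spectra} and Lemma~\ref{lem:endpoints}, splitting according to whether $u$ or $C=u-B$ is degenerate (that is, lies in $\{0,-1\}$), or one of them is the central coefficient $c$.

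First I would dispose of $q=2$. Every residue is degenerate, and each of $Z_2(0)=\{1\}$ and $Z_2(-1)=\{2\}$ lies at distance at most one from the other. So the right side is empty, matching $\F(2,B)=\emptyset$. Now let $q\ge3$.

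\emph{Degenerate case.} If $u=0$, then \eqref{eq:degenerate-distance} gives the distance $\gcd(q,u-B+1)-1=\gcd(q,B-1)-1$. This exceeds one exactly when $\gcd(q,B-1)>2$, which produces the entry $0$ of \eqref{eq:Fminus}. If $u=-1$, the distance is $\gcd(q,-1-B)-1$, producing the entry $-1$ of \eqref{eq:Fplus}. Applying the same formulas with the roles exchanged handles $u-B=0$, that is $u=B$, which gives $\gcd(q,B+1)-1$ and the entry $B$ of \eqref{eq:Fplus}. Likewise $u-B=-1$, that is $u=B-1$, gives $\gcd(q,B-1)-1$ and the entry $B-1$ of \eqref{eq:Fminus}. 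The formulas in \eqref{eq:degenerate-distance} hold for every $D$, so overlapping cases (for instance $u=0$ and $u=B$ simultaneously, or $B=0$) need no separate treatment. I only have to check that each computation yields the same condition.

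\emph{Both coefficients nondegenerate.} If $q$ is even, Theorem~\ref{thm:spectra} gives distance at most one, so there are no further failures. If $q$ is odd and neither $u$ nor $u-B$ equals $c$, the spectra intersect. The remaining possibilities are $u=c$ or $u-B=c$.
\begin{itemize}
\item For $u=c$, the formula \eqref{eq:central-distance} with $D=c-B$ gives $\min\{\gcd(q,c-B),\gcd(q,c-B+1)\}-1$. Multiplying by the unit $-2$ gives $\gcd(q,c-B)=\gcd(q,2B+1)$ and $\gcd(q,c-B+1)=\gcd(q,2B-1)$, using $2c=-1$. The distance exceeds one iff both gcds are at least $3$. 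Since $q$ is odd, a gcd exceeding $1$ is automatically at least $3$, so the condition is exactly that of \eqref{eq:Fcentral}.
\item For $u=B+c$, symmetry of the distance gives the same two gcds with $B$ replaced by $-B$, hence the same condition.
\end{itemize}
The case $u=c=u-B$ forces $B=0$. The central-distance formula with $D=c$ gives $\gcd(q,c)=\gcd(q,c+1)=1$, so the distance is $0$, consistent with the condition failing at $B=0$.

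\emph{Main obstacle.} The bookkeeping of the overlaps is the only delicate step. A coefficient may be both degenerate and central only if $c\in\{0,-1\}$, which requires $q=1$, so this cannot occur for $q\ge3$. The nondegenerate $q\ge3$ formulas must also be applied only when both coefficients avoid $\{0,-1\}$, which the case split guarantees.

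Finally, the union of the indicated sets has at most $4$ elements, or $6$ for odd $q$. This is not part of the identity, but it follows immediately from the explicit form of $\F(q,B)$.
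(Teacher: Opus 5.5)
Your argument is the paper's: the degenerate formulas \eqref{eq:degenerate-distance} produce the entries $0,B-1$ and $-1,B$, the central formula \eqref{eq:central-distance} rescaled by the unit $-2$ produces \eqref{eq:Fcentral}, and Theorem~\ref{thm:spectra} shows that no other residue fails. One overlap is left unchecked, and it is not the one treated in your ``main obstacle'' paragraph.

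A central entry can coincide with a degenerate entry: $c=B$, $c=B-1$, $B+c=0$, or $B+c=-1$. Such a $u$ falls in your degenerate case. There you decide its membership in the right-hand side using only the gcd conditions of \eqref{eq:Fminus} and \eqref{eq:Fplus}. But $u$ would also lie in $\F(q,B)$ if the condition in \eqref{eq:Fcentral} held, so you must rule that out.

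The check is immediate. Each of the four coincidences forces $2B\equiv\pm1$. Then one of $2B-1,2B+1$ is $\pm2$, a unit modulo the odd $q$, so \eqref{eq:Fcentral} is absent. Equivalently, \eqref{eq:central-distance} remains true for $D\in\{0,-1\}$, where both sides vanish since $Z_q(c)=\{1,q\}$. This is what the paper's remark that the formulas ``handle any coincident entries'' relies on: each listed entry's condition is exactly the distance criterion at that entry, so coincidences take care of themselves.
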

\begin{proof}
The cases $u=0$ or $u-B=-1$ give~\eqref{eq:Fminus}; $u=-1$ or $u-B=0$ give~\eqref{eq:Fplus}, by the degenerate distance formulas. The only other possibilities are $u=c$ or $u-B=c$ for odd $q$. Multiplying the other coefficient and its successor by the unit two in~\eqref{eq:central-distance} gives the two conditions in~\eqref{eq:Fcentral}. They also handle any coincident entries of the displayed pairs. For $q=2$, the two singleton cut spectra are always at distance at most one.
\end{proof}

The bounds four and six are attained:
\[
 \F(12,5)=\{0,4,5,11\},\qquad
 \F(105,4)=\{0,3,4,52,56,104\}.
\]
The involution $u\mapsto B-1-u$ preserves the exceptional set. The sparseness relevant to extension is stronger than its cardinality: it contains no three consecutive residues.

\begin{lemma}[Three consecutive residues]\label{lem:three}
For all $q\ge2$, $B,u\in\Zq$, at least one of $u,u+1,u+2$ lies outside $\F(q,B)$.
\end{lemma}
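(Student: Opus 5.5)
The plan is a case analysis on coincidences among the at most six listed residues. Write
\[
 \F(q,B)\subseteq\{0,-1,c\}\cup\{B-1,B,B+c\},
\]
and tag each element with the gcd condition that puts it into $\F(q,B)$. The elements $0$ and $B-1$ need $\gcd(q,B-1)>2$. The elements $-1$ and $B$ need $\gcd(q,B+1)>2$. The elements $c$ and $B+c$ need $\gcd(q,2B-1)>1$ and $\gcd(q,2B+1)>1$, and occur only for odd $q$. The key observation is that three consecutive residues $u,u+1,u+2$ in $\F(q,B)$ give two adjacent pairs sharing the point $u+1$. I would classify which pairs of listed residues can be adjacent modulo $q$ when both are actually present, and then show that two such adjacencies cannot chain.

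First I would dispose of small moduli by direct inspection, namely $q\le3$ and perhaps $q=5$. For $q=2$ the set $\F(2,B)$ is empty. For $q=3$ every gcd condition $>2$ forces $3$ to divide the relevant number, so it can be checked against the three residues by hand. For the general case I would use the involution $u\mapsto B-1-u$, which preserves $\F(q,B)$, to halve the cases; it exchanges $0\leftrightarrow B-1$, $-1\leftrightarrow B$ and $c\leftrightarrow B+c$. The adjacencies are then sorted as follows.
\begin{itemize}
\item The \emph{internal} adjacencies are $\{-1,0\}$ and $\{B-1,B\}$.
\item An adjacency between $c$ and $B+c$ requires $B=\pm1$. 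Then $2B\mp1=\pm1$ is a unit, so the central condition fails and neither element is present.
\item For odd $q$ we have $c+1=-c$. Adjacencies between $c$ and $0$ or $-1$ force $c\in\{1,-2\}$ or similar, hence $q\le5$, which the small cases cover.
\item Adjacencies between $c$ and $B-1$ or $B$ force $B$ to be one of $c+1,c,c+2,c-1$, so $2B$ is one of $0,-1,1,-3$ modulo $q$. Each of these makes $2B+1$ or $2B-1$ a unit, or $\pm2$, or similar. I expect every such case to kill one of the two central conditions or one of the conditions $\gcd(q,B\pm1)>2$, since $B\pm1$ then becomes $\pm$ a small multiple of $c$. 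These are the cases I will check one at a time.
\item The cross adjacencies between $\{0,-1\}$ and $\{B-1,B\}$ force $B\in\{2,1,0,-1\}$, and then the specific residues involved must be checked against their tags.
\end{itemize}

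Once only internal adjacencies survive, a triple must chain $\{-1,0\}$ with $\{B-1,B\}$. This gives either $B-1=1$, that is $B=2$ with triple $\{-1,0,1\}$, or $B=-2$, that is $B=-1$ with triple $\{-2,-1,0\}$. For $B=2$ the residue $1=B-1$ requires $\gcd(q,1)>2$, which is false. For $B=-1$ the residue $-2=B-1$ requires $\gcd(q,-2)>2$, which is false. The same tag argument covers coincidences in which a residue appears through two families at once.

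The main obstacle will be keeping the central-versus-linear cross cases honest: whenever $B$ is forced near $c$ or near $0$, I need to identify exactly which gcd condition becomes trivially $1$ or $2$. I would organize this by a small table with rows giving the forced value of $B$, the resulting triple, and the failing condition.
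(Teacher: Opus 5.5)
Your route is correct, and it differs from the paper's.

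\textbf{The paper's route.} The paper checks $q\le6$ by hand and treats even $q\ge8$ through $\{-1,0\}\cup\{B-1,B\}$. For odd $q\ge7$ it doubles the candidate set to $I\cup(I+2B)$ with $I=\{-2,-1,0\}$. A consecutive triple then becomes a step-two progression, two of its terms must be the endpoints of one translate, and this pins down $B\in\{\pm1,\pm2,\pm3/2\}$ from the shape of the candidate set alone. Only then are the gcd conditions consulted, case by case.

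\textbf{Your route.} You eliminate adjacencies label by label through their tags. For coincident residues, pick one label whose condition holds for each element of the triple; consecutive elements then give an adjacency between two present labels. Done carefully, this proves something sharper:
\begin{itemize}
\item Every adjacency involving $c$ or $B+c$ either occurs only at $q=3$ or forces $2B\in\{\pm1,\pm2,\pm3\}$. In the latter case one of $2B\pm1$ equals $\pm1$ or $\pm2$, a unit for odd $q$. The central condition never holds at $q=3$: since $\gcd(2B-1,2B+1)=1$, it forces $q$ to have two distinct odd prime factors. So present central residues have no exceptional neighbours.
\item Every cross adjacency between $\{0,-1\}$ and $\{B-1,B\}$ forces a value of $B$ at which one of its two tags becomes $\gcd(q,\pm1)=1$ or $\gcd(q,\pm2)\le2$.
\item Hence the only pairs of adjacent exceptional residues are $\{-1,0\}$ and $\{B-1,B\}$. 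These are present only when both $\gcd(q,B\pm1)>2$, and they share a point only at $B=\pm1$, where one of those two conditions fails.
\end{itemize}
Your argument is uniform in the parity of $q$ and needs no hand-checked cases beyond $q=2$. The paper's doubling is more compact, but it pays with the parity split and the small cases.

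\textbf{Slips to fix when you write out the table.} None of these affects the method.
\begin{itemize}
\item For $B\in\{c-1,c,c+1,c+2\}$ you get $2B\in\{-3,-1,1,3\}$, not $\{0,-1,1,-3\}$.
\item The cross adjacencies force $B\in\{0,\pm1,\pm2\}$. You omitted $B=-2$, which comes from $-1$ adjacent to $B$. It is excluded because then $\gcd(q,B+1)=1$.
\item In your last paragraph, two internal pairs sharing a point means $B-1=0$ or $B=-1$, so $B=\pm1$. Your case $B=2$ with triple $\{-1,0,1\}$ is really the cross adjacency $0$--$(B-1)$. The genuine shared-point case $B=1$ is not checked as written; there $-1$ and $B$ would need $\gcd(q,2)>2$, which fails.
\end{itemize}
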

\begin{proof}
For $q\le6$ the assertion follows directly from~\eqref{eq:Fminus}--\eqref{eq:Fcentral}. For instance, the nonempty sets at $q=6$ are $\{0\}$, $\{2,5\}$, $\{0,3\}$, and $\{5\}$.

For even $q\ge8$, the set is contained in the two consecutive pairs $\{-1,0\}$ and $\{B-1,B\}$. A triple of consecutive residues in this union forces $B\in\{\pm1,\pm2\}$. Substituting these values in the two gcd conditions leaves at most two exceptional residues.

Now let $q\ge7$ be odd and write $h=2^{-1}$ in $\Zq$. The candidate set is
\[
 \{-1,-h,0\}+\{0,B\}.
\]
After multiplication by two, it becomes $I\cup(I+2B)$ with $I=\{-2,-1,0\}$. A three-term progression of step two in this union has two terms in one of the two translates of $I$. Their difference cannot be four when $q\ge7$; they must be adjacent terms of the progression and the endpoints of that translate. Placing the remaining term in the other translate forces
\[
 B\in\{\pm1,\pm2,\pm3/2\}.
\]
For $B=\pm1$ the exceptional set has one point. For $B=3/2$ it is contained in $\{-1,B\}$, and for $B=-3/2$ in $\{0,B-1\}$. If $B=2$, the pair~\eqref{eq:Fminus} is absent. Neither $0$ nor $1$ can occur from the other two pairs when $q\ge7$. Thus neither endpoint pair $\{-1,0\}$ or $\{1,2\}$ required above is present. The case $B=-2$ follows by the reflection $u\mapsto-u-1$, which sends $\F(q,B)$ to $\F(q,-B)$.
\end{proof}

\begin{lemma}[Units fixing $B$]\label{lem:B-units}
Suppose $w,z$ are units modulo $q$ with $wB=zB=B$. Then for every $u$, at least one of
\begin{equation}\label{eq:three-states}
 u,\qquad u+w,\qquad u+w+z
\end{equation}
lies outside $\F(q,B)$.
\end{lemma}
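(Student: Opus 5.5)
The plan is to argue directly from the explicit description of $\F(q,B)$ in~\eqref{eq:Fminus}--\eqref{eq:Fcentral}. The idea is to treat the three states~\eqref{eq:three-states} as a generalized progression whose two steps $w,z$ are units satisfying $(w-1)B=(z-1)B=0$. When $w=z=1$ the statement is exactly Lemma~\ref{lem:three}, so the work is to show that the extra freedom in $w,z$ creates no new bad triples.

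First, write the candidate set as $\{0,-1\}\cup\{B-1,B\}$ together with $\{c,B+c\}$ when $q$ is odd, that is, as $\{-1,-h,0\}+\{0,B\}$ with $h=2^{-1}$. The differences between two candidates fall into two classes. Differences inside one translate lie in $\{\pm1,\pm h\}$, since $1-h=h$. Differences across the translates lie in $B+\{0,\pm1,\pm h\}$ or in its negative. If all three states lie in $\F(q,B)$, then $w$ and $z$ are such differences. I will split according to whether each step stays within a translate or crosses between them.

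For the crossing cases, $w=\pm B+\epsilon$ with $\epsilon\in\{0,\pm1,\pm h\}$. Combining this with $(w-1)B=0$ gives a quadratic relation such as $B^2+(\epsilon-1)B=0$ in $\Zq$. The requirement that $w$ be a unit also constrains $\gcd(q,B+\epsilon)$. For example, if $w=B+1$ is a unit then $\gcd(q,B+1)=1$, so the pair~\eqref{eq:Fplus} is absent. In the same way, $w=B-1$ kills~\eqref{eq:Fminus}, and $w=B\pm h$ makes $2B\pm1$ a unit, which kills~\eqref{eq:Fcentral}. I expect every crossing step to delete one of the pairs containing its own endpoints, so that the triple cannot be completed.

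For the within-translate cases, $w,z\in\{\pm1,\pm h\}$. The condition $(w-1)B=0$ then gives strong constraints. With $w=-1$ we get $2B=0$, and with $w=\pm h$ we get $(1\mp 2)B=0$, that is, $B=0$ or $3B=0$. Each such case leaves only a handful of values of $B$, namely $B=0$, $B=q/2$, or $B\in\{0,\pm q/3\}$. For these I would compute $\F(q,B)$ explicitly from the gcd conditions, as in the proof of Lemma~\ref{lem:three}, and check that it contains no triple of the required shape.

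The main obstacle is the mixed case, where one step is internal and the other crosses, together with coincidences among the six candidates that make the classification of differences ambiguous. For small $q$, say $q\le6$, I would simply verify the claim directly. For larger $q$, I would first use the involution $u\mapsto B-1-u$ and the reflection $u\mapsto -u-1$, as in Lemma~\ref{lem:three}. These roughly halve the cases, and in each remaining case I would check that the unit condition removes a needed pair.
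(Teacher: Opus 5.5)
Your route differs from the paper's. You work directly with differences of the candidate residues $\{0,-1,c\}\cup\{B-1,B,B+c\}$, while the paper reduces to Lemma~\ref{lem:three}. The facts you list are correct, but the proposal stops exactly where the argument has to be made, and your plan for that point would not work.

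The inference ``a crossing unit step deletes a pair containing one of its endpoints, so the triple cannot be completed'' is valid only if that endpoint lies in no other pair, that is, only if the six candidates are distinct. You propose to handle coincidences by checking $q\le6$ and applying the two reflections. Coincidences occur for every $q$, so this does not dispose of them. The missing observation is that the candidates collide only when $B\in\{0,\pm1,\pm h\}$ (for even $q$, $B\in\{0,\pm1\}$). For these values the gcd conditions give small sets: $\F(q,0)=\varnothing$, $\F(q,1)\subseteq\{0\}$, $\F(q,-1)\subseteq\{-1\}$, $\F(q,h)\subseteq\{-1,h\}$ and $\F(q,-h)\subseteq\{0,-h-1\}$. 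Three states in a set of at most two elements force $z=-w$. Then $B=zB=-wB=-B$, which is impossible for $B=\pm h$.

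Once the six candidates are distinct, each exceptional residue lies in a unique pair. A crossing step is then impossible by itself, whatever the other step is. For each ordered crossing pair the difference is $\pm B$, $\pm(B\pm1)$ or $\pm(B\pm h)$:
\begin{itemize}
\item If it is $\pm B$, the relation $(w-1)B=0$ forces $B=\pm1$, a collision value.
\item Otherwise the unit $w$ makes $\gcd(q,B\pm1)=1$ or $2B\pm1$ a unit. This deletes the pair containing one endpoint.
\end{itemize}
So your ``mixed case'' never occurs: all three states lie in one translate, and your internal constraints finish the proof.
\begin{itemize}
\item A back-and-forth triple gives $2B=0$. Hence $B=q/2$ with $q$ even, and $\F(q,q/2)=\varnothing$.
\item A triple through all three residues of a translate uses a step $\pm h$. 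The step $+h$ forces $B=0$. The step $-h$ forces $3B=0$, so $\gcd(q,B\mp1)\mid 3$, and \eqref{eq:Fminus} and \eqref{eq:Fplus} cannot both be present.
\end{itemize}
Completed this way, your argument is self-contained and covers $w=z=1$ as well.

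The paper's proof is shorter. It passes to the largest divisor $q_0$ of $q$ coprime to $B$. Every common divisor of $q$ with $B\pm1$ or $2B\pm1$ divides $q_0$, so $\F(q,B)$ projects into $\F(q_0,B)$, and the set is empty if $q_0\le2$. Otherwise $B$ is a unit modulo $q_0$, so $w\equiv z\equiv1\pmod{q_0}$, and Lemma~\ref{lem:three} applies at once. Your relations $B(B+\epsilon-1)=0$ detect this same reduction one case at a time.
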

\begin{proof}
Let
\[
 q_0=\prod_{p^e\parallel q,\ p\nmid B}p^e.
\]
Every common divisor of $q$ with $B-1,B+1,2B-1$, or $2B+1$ divides $q_0$. Thus each exceptional residue modulo $q$ projects to an exceptional residue modulo $q_0$, by Proposition~\ref{prop:failure}. In the odd case the central residue projects to $(q_0-1)/2$. If $q_0\le2$, the original exceptional set is empty.

Otherwise $B$ is a unit modulo $q_0$, so $w\equiv z\equiv1\pmod{q_0}$. If all three residues in~\eqref{eq:three-states} were exceptional, their images would be three consecutive exceptional residues modulo $q_0$, contradicting Lemma~\ref{lem:three}.
\end{proof}

\subsection{Extension and existence}
\begin{proof}[Proof of Theorem~\ref{thm:extension}]
The exact criterion is Proposition~\ref{prop:fixed-reduction} together with Proposition~\ref{prop:failure}. For the two free cosets, let
\[
 u_0=t+\sum_{k\in E_0}r^k.
\]
The relations~\eqref{eq:intro-coordinates} give
\begin{equation}\label{eq:B-invariant}
 (r-1)B=0\pmod q.
\end{equation}
Hence $r^i$ and $r^j$ are units fixing $B$. The three prescribed extensions have return residues
\[
 u_0,\qquad u_0+r^i,\qquad u_0+r^i+r^j.
\]
Lemma~\ref{lem:B-units} supplies a successful residue, and Lemma~\ref{lem:realization} completes its paths.

For sharpness, take the consistent parameters
\[
 (q,g,r,t)=(12,9,1,4)
\]
and prescribe $a$ in the first color on every nonterminal coset except one. Here $B=5$, and the two possible residues are $4,5$, both in $\F(12,5)$. Proposition~\ref{prop:fixed-reduction} excludes every terminal completion of these choices. With two free cosets the three possible residues are $4,5,6$; the last succeeds with cuts $(1,2)$. This proves the stated optimality.
\end{proof}

\begin{proof}[Proof of Theorem~\ref{thm:existence}]
When $g\ge3$, apply Theorem~\ref{thm:extension}, prescribing $a$ on all other nonterminal cosets.

When $g=2$, choose $E=\varnothing$. The two return coefficients are $A=t$ and $C=-t-r-1$. The relations $r^2=1$ and $rt=t$ imply
\[
 r(t+1)=t+r.
\]
Thus $c=\gcd(q,t+1)=\gcd(q,t+r)$ is an initial successful cut for both coefficients by Lemma~\ref{lem:endpoints}. Lemma~\ref{lem:realization} applies with $(s,v)=(c,c)$, including $q=2$.

When $g=1$, the group is cyclic and $a=\eta^t$, $b=\eta^{t+1}$. The generators being nonidentity means $t\notin\{0,-1\}$, so $q\ge3$. Theorem~\ref{thm:spectra} applies to $t,-t-1$. If one is the odd central coefficient, both are, and they share the endpoint cuts. Otherwise the theorem supplies equal or adjacent cuts. Use the terminal table of Lemma~\ref{lem:realization} directly on this single copy of $\Zq$. Its two partial maps are the required paths.
\end{proof}

\begin{corollary}\label{cor:metacyclic}
Every finite nonabelian metacyclic group admits a generating pair $a,b$ of distinct nonidentity elements for which $\Cay(G;\{a,b\})$ has two arc-disjoint Hamiltonian paths.
\end{corollary}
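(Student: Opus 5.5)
By Theorem~\ref{thm:existence}, it suffices to exhibit, in every finite nonabelian metacyclic group $G$, distinct nonidentity generators $a,b$ with $G=\langle a,b\rangle$ and $\langle ba^{-1}\rangle$ normal. No Hamiltonian analysis is needed beyond this; the work is entirely group-theoretic.

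Since $G$ is metacyclic, there is a cyclic normal subgroup $K=\langle x\rangle$ with $G/K$ cyclic. Choose $y\in G$ whose image generates $G/K$, so that $G=\langle x,y\rangle$. We want $a,b$ with $ba^{-1}$ generating a normal subgroup. The natural choice is
\[
 a=y,\qquad b=xy,
\]
so that $ba^{-1}=x$ and $\langle ba^{-1}\rangle=K$, which is normal. Moreover $\langle a,b\rangle=\langle y,xy\rangle$ contains $x=(xy)y^{-1}$ and $y$, so it equals $G$.

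It remains to check that $a,b$ are distinct and nonidentity. Distinctness is equivalent to $x\ne1$. The condition $a=y\ne1$ holds, and $b=xy\ne1$ means $y\ne x^{-1}$. If $x=1$, then $G=\langle y\rangle$ is cyclic, contradicting nonabelianness; so $x\ne 1$. If $y=1$ or $y\in K$, then $G=K$ is cyclic, again a contradiction. So $y\notin K$, which gives $y\ne1$ and $y\ne x^{-1}$.

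The only potential obstacle is confirming that the metacyclic structure really gives a cyclic normal $K$ with cyclic quotient and a lift $y$ generating $G$ together with $x$. This is exactly the definition of metacyclic together with the fact that $K$ and a lift of a generator of $G/K$ generate $G$. The nonabelian hypothesis is used only to rule out the degenerate cases where $G$ is cyclic.
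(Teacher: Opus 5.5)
Your proposal is correct and matches the paper's proof: take a cyclic normal $K=\langle x\rangle$ with cyclic quotient and a lift $y$ of a generator of $G/K$, set $a=y$ and $b=xy$ so that $\langle ba^{-1}\rangle=K$, and use nonabelianness to get $1<K<G$, hence $a\ne b$ and both lie outside $K$. Theorem~\ref{thm:existence} then applies. Your case checks ($x\ne1$, $y\notin K$, hence $y\ne x^{-1}$) are exactly the content of the paper's phrase ``$1<K<G$''.
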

\begin{proof}
Choose a cyclic normal subgroup $K=\langle\eta\rangle$ with cyclic quotient, and let $aK$ generate $G/K$. Put $b=\eta a$. Then $G=\langle a,b\rangle$ and $\langle ba^{-1}\rangle=K$. Noncommutativity ensures $1<K<G$, so $a,b$ are distinct and lie outside $K$. Apply Theorem~\ref{thm:existence}.
\end{proof}

The extension result also gives at least $2^{g-3}$ distinct ordered path pairs when $g\ge3$: prescribe the other $g-3$ cosets arbitrarily and complete each prescription. Distinct prescriptions differ on a full nonterminal coset of the first path.

\section{Construction, examples, and further directions}\label{sec:examples}
\subsection{Construction and sharpness}
For a prescribed set $E$, compute $B,u_E$ and the gcds in~\eqref{eq:Fminus}--\eqref{eq:Fcentral}. This decides extension directly. With two free cosets, inspect the three residues in Theorem~\ref{thm:extension}. Once a successful residue is chosen, test the two permutations $Q_{u,s},Q_{u-B,s}$ for $s=1,\ldots,q$. Following the orbit of zero tests each in $O(q)$ operations. A scan of the two sorted spectra finds compatible cuts, and the terminal table gives the full successor maps. The total construction takes $O(q^2+qg)$ arithmetic operations and $O(qg)$ storage. The arithmetic model uses residues modulo $q$ and the explicit graph size $qg$; integer factorization is not required by this construction.

In the sharpness example $(12,9,1,4)$, leave cosets $1,2$ free and prescribe $a$ elsewhere in the first color. The chosen set is $E=\{1,2\}$, and the cuts are $(1,2)$. With vertex $(z,j)$ numbered $qj+z$, the two paths have endpoint pairs
\[
 (13,0),\qquad(12,1),
\]
and the unused arcs are $(0,12)$ and $(1,13)$.

A nonabelian example has
\[
 (q,g,r,t)=(84,21,37,28),\qquad (i,j)=(3,18),
\]
with prescribed selected cosets
\[
 E_0=\{1,2,5,6,8,9,11,12,14,15,17,20\}.
\]
Here $B=77$, the three residues are $76,77,78$, and the first two fail while the third has cuts $(1,2)$. Each path has $1764$ vertices. These examples concern the designated terminal coset and fixed external choices; the unrestricted existence theorem continues to hold. Explicit vertex lists for these and four further examples, together with programs checking Proposition~\ref{prop:profile} and the results of Sections~\ref{sec:confinement} and~\ref{sec:failure} on finite ranges, are provided as supplementary material.

\subsection{The exceptional quadratic relation}
The central coefficient has a second elementary interpretation. Let a primitive $q$th root of unity act on two variables by
\[
 (X,Y)\longmapsto(\zeta^D X,\zeta^{D+1}Y).
\]
The nonnegative lattice points of $\Lambda_D$ are exactly the exponent vectors of invariant monomials. Among monomials involving both variables, total degree two has the unique candidate $XY$. It is invariant precisely when $2D+1=0\pmod q$, the same condition that creates the exceptional cut spectrum. Thus the spectral obstruction is the smallest possible positive relation between the two steps.

\subsection{Beyond a normal difference subgroup}
Arc forcing and Lemma~\ref{lem:terminal-cosets} hold without normality. To go beyond normality, one needs complementary external choices whose components all meet a terminal section. If their endpoint permutations are translations of $H$, the spectral theorem determines the remaining joining question. Constructing such external path families is an additional problem.

The obstruction already appears in the dihedral group of order eight,
\[
 G=\langle a,b:a^4=b^2=1,\ bab^{-1}=a^{-1}\rangle,
 \qquad H=\langle ab\rangle.
\]
The graph has a Hamiltonian path, for example the word $a^3ba^3$, but no two arc-disjoint Hamiltonian paths for this generating pair. To see this, normalize both terminals into $H$ using Lemma~\ref{lem:terminal-cosets}. On the three external cosets $aH,a^2H,a^3H$, arc forcing gives complementary constant choices. Equal first-color choices on either adjacent pair force a second-color or first-color $b$-cycle away from $H$. Hence the choices alternate. The color using $a$ on $a^2H$ and $b$ on its two neighbors then contains the closed cycle
\[
 a^2\longrightarrow a^3\longrightarrow a^3b
 \longrightarrow a^2b\longrightarrow a^2,
\]
again disjoint from $H$. This rules out the pair. In particular, Theorem~\ref{thm:existence} does not extend to all generating pairs of nilpotent groups. In the same group, the pair $(b,ab)$ has difference subgroup $\langle a\rangle$, which is normal, so its Cayley digraph does have two arc-disjoint Hamiltonian paths. The contrast shows how the generating pair determines the relevant difference subgroup.

\subsection{Quantitative confinement and several spectra}
The four-run argument also measures the effect of weakening the length inequality. Suppose two sector-filled sequences in $\{0,\ldots,N\}$ have overlapping hulls, satisfy
\[
 (L+R+2)h_i\le N+\varepsilon
\]
for the same $\varepsilon\ge0$, and are separated by $\delta\ge2$. If neither is contained in a gap of the other, then
\begin{equation}\label{eq:relaxed-confinement}
 \varepsilon\ge\delta(\delta-1).
\end{equation}
Indeed, restricting to their joint hull preserves the same allowance $\varepsilon$. Lemma~\ref{lem:five-runs} leaves four runs. In the notation of Lemma~\ref{lem:four-runs}, the relaxed length inequalities give
\[
 2N+2\varepsilon
 \ge4(\alpha+\beta)+\delta(\ell+r-4).
\]
Combining this with twice~\eqref{eq:four-upper} yields
\[
 2\varepsilon\ge(\delta-2)(\ell+r)+2\delta
 \ge2\delta(\delta-1).
\]
The example in Remark~\ref{rem:length-needed} attains equality at $\varepsilon=\delta=2$.

For several cut spectra, pairwise intersection leaves a further compatibility question. Direct evaluation of~\eqref{eq:Q} gives
\begin{align*}
 Z_{20}(4)&=\{5,7,9,13,15,17\},\\
 Z_{20}(6)&=\{1,13,15,17,19\},\\
 Z_{20}(10)&=\{1,3,5,7,9,11\}.
\end{align*}
These three sets intersect pairwise, while their total intersection is empty. A quantitative formulation is the least total number of closed components at a common cut,
\[
 \kappa(D_1,\ldots,D_m)
 =\min_{1\le s\le q}\sum_{i=1}^m j_{D_i}(s).
\]
In this example $\kappa(4,6,10)=2$, attained at $s=13$ with component counts $(0,0,2)$. Bounds for $\kappa$ would describe simultaneous cut choices. An application to higher-multiplicity path packing would also require a suitable terminal arrangement in a digraph of sufficient degree. A separate counting question is to determine how many prescribed coset choices extend when their cardinality is fixed.

\appendix
\section{A self-contained proof of the cycle profile}\label{app:profile}
We prove Proposition~\ref{prop:profile} in the consecutive-generator coordinates. This is a proof of the stated specialization of the Curran--Witte cycle-class theorem, with its geometric antecedents made explicit. The local argument is the one-crossing principle of~\cite[Lemma~7.9]{CW}. The final multiplicity count is obtained by pairing residues.

\subsection{Holding the initial vertex fixed}
Fix $q\ge3$, $D\in\{1,\ldots,q-2\}$, and $N=q-1$. For $0\le d\le N$, define $P_d$ at the vertex $i-D$ by
\begin{equation}\label{eq:Pd}
 i-D\longmapsto
 \begin{cases}
 i+1,&0\le i<d,\\
 \bot,&i=d,\\
 i,&d<i<q.
 \end{cases}
\end{equation}
All coordinates are modulo $q$. This graph is a spanning partial permutation with one open component from $0$ to $d-D$. If $d=q-s$, translation by $D-d$ identifies it with $R_{D,s}$. Write $j(d)$ for its number of closed components.

The oriented square grid modulo $\Lambda_D$ embeds in the torus $\mathbb R^2/\Lambda_D$; its horizontal and vertical edges represent the steps $D,D+1$. The covolume of $\Lambda_D$ is $q$, since the map $(x,y)\mapsto Dx+(D+1)y$ is onto $\Zq$.

An embedded essential circle on a torus represents a primitive homology class. Indeed, cutting the torus along it yields an annulus, and a transverse arc closes to a curve with intersection number one. Two disjoint essential circles have algebraic intersection number zero, so their primitive classes are parallel. In lattice coordinates the intersection number is $\det(U,V)/q$. A directed cycle in our grid has a nonzero displacement $U=(x,y)\in\Lambda_D$ with $x,y\ge0$, so it is essential. Positivity fixes the sign of the primitive class. Consequently all closed components of a single $P_d$ have the same primitive displacement.

\begin{lemma}[A cut move]\label{lem:cut-move}
For $0\le d<N$, the values $j(d+1),j(d)$ differ by one. If both are positive, the two graphs have the same primitive cycle class.
\end{lemma}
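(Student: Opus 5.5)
The plan is to compare $P_d$ and $P_{d+1}$ arc by arc, show that they differ by a single local surgery, and then read off how that surgery changes the components.

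\textbf{Step 1: the two graphs differ in one exchange of arcs.} From~\eqref{eq:Pd}:
\begin{itemize}
\item In $P_d$, the vertex $d-D$ has no outgoing arc, and $(d+1)-D\mapsto d+1$.
\item In $P_{d+1}$, the vertex $d-D\mapsto d+1$, and $(d+1)-D$ has no outgoing arc.
\item Every other vertex has the same successor in both graphs.
\end{itemize}
So $P_{d+1}$ is obtained from $P_d$ by deleting the arc $e=\bigl((d+1)-D\to d+1\bigr)$ and inserting $e'=\bigl(d-D\to d+1\bigr)$. The vertices $d-D$ and $(d+1)-D$ are distinct, so this is a genuine exchange. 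Both arcs lie in the grid, since they are steps by $D+1$ and $D$ respectively. Before the move, $d-D$ is the terminal vertex of the open component of $P_d$, which runs from $0$ to $d-D$.

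\textbf{Step 2: the deleted arc lies on a closed cycle.} Suppose the tail $(d+1)-D$ of $e$ lies on a closed cycle $C$ of $P_d$.
\begin{itemize}
\item Deleting $e$ opens $C$ into a path from $d+1$ to $(d+1)-D$.
\item Inserting $e'$ attaches the end $d-D$ of the open component to the start $d+1$ of this path.
\end{itemize}
The result is one open path from $0$ to $(d+1)-D$. The other closed cycles are untouched, so $j(d+1)=j(d)-1$.

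\textbf{Step 3: the deleted arc lies on the open path.} Suppose instead that $(d+1)-D$ lies on the open component $0\to\cdots\to d-D$. Since $e$ leaves $(d+1)-D$, this vertex is not the terminal one, and $d+1$ follows it on the path.
\begin{itemize}
\item Deleting $e$ splits the path into an initial piece from $0$ to $(d+1)-D$ and a final piece from $d+1$ to $d-D$.
\item Inserting $e'$ closes the final piece into a new directed cycle.
\end{itemize}
All previous closed cycles survive, so $j(d+1)=j(d)+1$. We must check that $d+1\neq0$, so that the initial vertex of the open path is not absorbed into the new cycle. This holds because $d+1\le N<q$. In both cases the values differ by exactly one.

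\textbf{Step 4: the cycle classes agree.} The argument shows that the closed cycles of one graph form a subset of those of the other: one cycle is destroyed in Step 2, and one is created in Step 3. If $j(d)$ and $j(d+1)$ are both positive, the smaller of the two families is nonempty. Hence the two graphs share at least one closed component.

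By the torus discussion preceding the lemma, all closed components of a single $P_d$ have the same primitive displacement. This uses that disjoint embedded essential circles are parallel, and that positivity fixes the sign. The shared component therefore carries the common primitive class of both graphs, so the classes coincide.

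The only delicate point is the bookkeeping in Step 3. One must confirm that the vertex $d+1$ lies strictly after $(d+1)-D$ on the open path and that the piece being closed does not contain $0$. Both facts follow from the explicit successor rule and from $d+1\neq0$.
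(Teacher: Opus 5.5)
Your proof is correct and is essentially the paper's argument: the move exchanges the arc $(d+1)-D\to d+1$ for $d-D\to d+1$, so a cycle is split off or absorbed according as $(d+1)-D$ lies on the open path or on a closed cycle, and a closed component common to both graphs fixes the primitive class. One harmless slip: $e$ is the step by $D$ and $e'$ is the step by $D+1$, not the other way round.
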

\begin{proof}
Put $v=d-D$ and $w=v+1$. The move from $P_d$ to $P_{d+1}$ deletes $w\to d+1$ and inserts $v\to d+1$. If $w$ is on the open path, this splits off one cycle. If it is on a closed cycle, that cycle joins the open path. All other cycles remain. When both counts are positive, at least one unchanged cycle is present in both graphs, and its class determines the class in each.
\end{proof}

\begin{lemma}[One crossing at a local minimum]\label{lem:local-crossing}
If $1\le d<N$ and $j(d-1)=j(d)+1=j(d+1)$, then $j(d)=0$. The primitive classes $U_-,U_+$ of the two neighboring graphs satisfy
\begin{equation}\label{eq:det}
 \det(U_-,U_+)=q.
\end{equation}
In particular, the slope increases at such a minimum.
\end{lemma}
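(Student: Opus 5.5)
We track the two cut moves around $d$ using Lemma~\ref{lem:cut-move}. Put $v=d-D$, so the move $P_{d-1}\to P_d$ replaces $v\to d$ by $v-1\to d$, and the move $P_d\to P_{d+1}$ replaces $v+1\to d+1$ by $v\to d+1$. Since $j$ drops by one at the first move, the deleted arc's tail $v$ lay on a closed cycle $\Gamma_-$ of $P_{d-1}$, and that cycle was absorbed into the open path. Since $j$ rises by one at the second move, the tail $v+1$ lies on the open path of $P_d$, and a new closed cycle $\Gamma_+$ of $P_{d+1}$ splits off through $v\to d+1$. Let $U_\pm$ be the primitive classes. The closed cycles of $P_d$ other than those involved survive in both $P_{d-1}$ and $P_{d+1}$. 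So if $j(d)>0$, one common cycle would force $U_-=U_+$, and the goal is to show this cannot happen at a local minimum.

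The geometric step is to lift to the torus $\mathbb R^2/\Lambda_D$ and examine the vertex $d$ (equivalently, the head region near $v,v+1$). There, $\Gamma_-$ passes through the arc $v\to d$ and $\Gamma_+$ through $v\to d+1$. Both cycles use the vertex $v$, but they leave it along different edges, one horizontal and one vertical. Perturbing the grid curves at this single vertex, I expect $\Gamma_-$ and $\Gamma_+$ to cross transversally exactly once, with a sign fixed by the orientation of the two steps. Away from $v$, the cycle $\Gamma_-$ consists of arcs of $P_{d-1}$ and $\Gamma_+$ of arcs of $P_{d+1}$. These agree except at the two modified tails, so a local check at the vertices $v-1,v,v+1$ should show there are no further crossings: shared arcs are pushed off consistently and contribute zero. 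This is the one-crossing principle of \cite[Lemma~7.9]{CW}, and it gives an algebraic intersection number $\pm1$, that is, $\det(U_-,U_+)=\pm q$. The sign comes from the orientation of the horizontal and vertical steps at $v$: $\Gamma_-$ leaves using the step that is "later" in the ray order. This yields $\det(U_-,U_+)=+q$, which is the asserted increase of slope.

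Given the determinant, $U_-$ and $U_+$ are not parallel. Any closed component of $P_d$ would survive into both neighbours and, by the torus argument preceding Lemma~\ref{lem:cut-move} (disjoint essential circles are parallel), would force $U_-=U_+$. Hence $j(d)=0$.

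The main obstacle is the careful local count of crossings. One has to verify that the modified arcs near $v$ create exactly one transverse intersection. The open path of $P_d$ must be handled too: it contains parts of both cycles, and pushing the curves off each other along shared segments must be done coherently on the torus. If this becomes messy, I would first try a concrete reparametrization in which $\Gamma_+$ is pushed slightly to one side of $\Gamma_-$ along shared edges, and then read off the single crossing at the junction.
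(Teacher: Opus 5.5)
Your identification of the two cycles is correct and matches the paper. With $v=d-D$, the cycle $\Gamma_-$ of $P_{d-1}$ contains the horizontal edge $v\to d$, and the cycle $\Gamma_+$ of $P_{d+1}$ contains the vertical edge $v\to d+1$. Once $\det(U_-,U_+)=q$ is known, your deduction of $j(d)=0$ from the parallelism of disjoint essential circles is exactly the paper's.

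The gap is the step that carries the lemma: that $\Gamma_-$ and a push-off of $\Gamma_+$ meet in exactly one transverse point, of positive sign. You state this only as an expectation (``I expect'', ``should show''). Invoking Lemma~7.9 of Curran--Witte replaces the argument rather than supplying it. The mechanism you describe also does not work as stated. Because $D\notin\{0,-1\}$, the vertex $v$ is neither $d$ nor $d+1$, so it has the same incoming edge in $P_{d-1}$ and in $P_{d+1}$. Following predecessors backward, $\Gamma_-$ and $\Gamma_+$ therefore share an entire arc that ends at $v$ and begins at $d+1$ or at $d$. So the curves are not transverse near $v$, and perturbing at a single vertex does not put them in general position. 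A push-off ``to one side along shared edges'' would have to be specified, and its contributions at both ends of the shared arc computed. The remark that $\Gamma_-$ leaves by the step that is ``later in the ray order'' does not determine the sign.

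The missing idea is to push off the whole graph uniformly and compare the full graphs, not the two cycles. Translate all of $P_{d+1}$ by $\epsilon(e_1-e_2)$ with $0<\epsilon<1$. Coinciding edges become disjoint parallel edges. A crossing then occurs only in two situations:
\begin{itemize}
\item a horizontal edge of $P_{d-1}$ and a translated vertical edge with the same original tail;
\item a vertical edge of $P_{d-1}$ and a translated horizontal edge with the same original head.
\end{itemize}
By \eqref{eq:Pd}, the first type needs $i>d-1$ and $i<d+1$, hence $i=d$, which is the tail $v$. The second type needs $i<d-1$ and $i+1>d+1$, which is impossible; there is no wraparound, since $i+1\le d-1\le q-3$.

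Now $\Gamma_-\subseteq P_{d-1}$ and $\Gamma_+\subseteq P_{d+1}$, and the unique crossing is between $v\to d\in\Gamma_-$ and the translate of $v\to d+1\in\Gamma_+$. Hence the intersection number is $\det(e_1,e_2)=+1$, which gives \eqref{eq:det} with its sign. This requires no analysis of how the two cycles overlap. Your instinct that only the vertices near $v$ matter is right, but it becomes a proof only through such a uniform push-off and the resulting index check.
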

\begin{proof}
Let $v=d-D$. The cycle removed from $P_{d-1}$ contains the horizontal edge $v\to d$; call it $\Gamma_-$. The cycle added in $P_{d+1}$ contains the vertical edge $v\to d+1$; call it $\Gamma_+$.

Translate the embedded graph $P_{d+1}$ by $\epsilon(e_1-e_2)$ with $0<\epsilon<1$. An original horizontal edge meets a translated vertical edge precisely when their original tails coincide. In~\eqref{eq:Pd}, the first graph's horizontal condition is $i>d-1$, and the second graph's vertical condition is $i<d+1$. Their common index is exactly $i=d$; Figure~\ref{fig:local-crossing} shows the corresponding local lifts.

The other possible crossing is an original vertical edge and a translated horizontal edge with the same original head. Their indices are $i,i+1$ modulo $q$. The required inequalities $i<d-1$ and $i+1>d+1$ are incompatible. The wrap at $q-1$ adds no case, since $i<d-1\le q-3$. Parallel translated edges are disjoint. Thus the graphs have exactly one transverse intersection, at the two specified edges, of positive sign.

It follows that $\Gamma_-$ and the translate of $\Gamma_+$ have intersection number one, giving~\eqref{eq:det}. If $P_d$ contained a closed component, it would remain in both neighbors, forcing its primitive class to be parallel to both $U_-$ and $U_+$. This contradicts~\eqref{eq:det}.
\end{proof}

\begin{figure}[tb]
\centering
\begin{tikzpicture}[x=2.7cm,y=2.4cm,>=Stealth,font=\small]
 \coordinate (V) at (0,0);
 \coordinate (X) at (1,0);
 \coordinate (Y) at (0,1);
 \coordinate (W) at (0.28,-0.28);
 \coordinate (Yp) at (0.28,0.72);
 \draw[gray,dotted] (V)--(Y);
 \draw[thick,->] (V)--(X);
 \draw[thick,densely dashed,->] (W)--(Yp);
 \draw[gray,->] (V)--(W);
 \fill (V) circle (1.2pt);
 \fill (0.28,0) circle (1.4pt);
 \node[below left=3pt] at (V) {$\widetilde v$};
 \node[right=3pt] at (X) {$\widetilde v+e_1$};
 \node[left=3pt] at (Y) {$\widetilde v+e_2$};
 \node[right=5pt] at (W) {$\widetilde v+\epsilon(e_1-e_2)$};
 \node[right=5pt] at (Yp) {$\widetilde v+e_2+\epsilon(e_1-e_2)$};
 \node[above=5pt] at (0.80,0) {$\Gamma_-$};
 \node[right=6pt] at (0.28,0.40) {$\Gamma_+^\epsilon$};
 \node[above right=2pt] at (0.28,0) {$+1$};
\end{tikzpicture}
\caption{The single positive crossing in Lemma~\ref{lem:local-crossing}, shown in a local lift to $\mathbb R^2$. The horizontal edge of $\Gamma_-$ is solid; the translated vertical edge of $\Gamma_+$ is dashed. The dotted edge records its original position. Their oriented intersection contributes $+1$, giving $\det(U_-,U_+)/q=1$.}\label{fig:local-crossing}
\end{figure}

\subsection{The total multiplicity}
Put $H_x=d_0-1$ and $H_y=d_1-1$. For every primitive internal vector $U$ with $\len(U)\le N$, let $H(U)=\lfloor N/\len(U)\rfloor$.

\begin{lemma}\label{lem:residue-count}
The number of points $(x,y)\in\Lambda_D$ with $x,y>0$ and $x+y\le N$ is $(q+1-d_0-d_1)/2$. Equivalently,
\begin{equation}\label{eq:total-multiplicity}
 N=H_x+H_y+2\sum_U H(U).
\end{equation}
\end{lemma}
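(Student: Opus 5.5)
We count the points by their length $\ell=x+y$ and pair lengths $\ell$ with $q-\ell$. Fix $\ell$ with $1\le\ell\le N$. A point $(x,y)\in\Lambda_D$ with $x,y>0$ and $x+y=\ell$ satisfies
\[
 Dx+(D+1)(\ell-x)\equiv0,\qquad\text{that is,}\qquad x\equiv(D+1)\ell\pmod q .
\]
Since $0<x<\ell<q$, the coordinate $x$ is determined by $\ell$. Let $\rho(\ell)\in\{0,\ldots,q-1\}$ be the least residue of $(D+1)\ell$. Then length $\ell$ carries exactly one such point when $0<\rho(\ell)<\ell$, and none otherwise. So the count equals the number of $\ell\in\{1,\ldots,N\}$ with $0<\rho(\ell)<\ell$.

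We pair $\ell$ with $q-\ell$, which is again in $\{1,\ldots,N\}$. If $\rho(\ell)\ne0$, then $\rho(q-\ell)=q-\rho(\ell)$. The condition $0<\rho(q-\ell)<q-\ell$ is therefore equivalent to $\rho(\ell)>\ell$. Hence, whenever $\rho(\ell)\notin\{0,\ell\}$, exactly one member of the pair $\{\ell,q-\ell\}$ contributes a point.

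The exceptional lengths are of two kinds. The case $\rho(\ell)=0$ means $(D+1)\ell\equiv0$, i.e.\ $(q/d_1)\mid\ell$; this gives $d_1-1$ values of $\ell$ in $\{1,\ldots,N\}$. The case $\rho(\ell)=\ell$ means $D\ell\equiv0$, i.e.\ $(q/d_0)\mid\ell$; this gives $d_0-1$ values. The two cases are disjoint: $\gcd(d_0,d_1)=1$, so $\operatorname{lcm}(q/d_0,q/d_1)=q$, and no $\ell<q$ is a multiple of both. Both conditions are also invariant under $\ell\mapsto q-\ell$, so the exceptional lengths are closed under the pairing.

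Exceptional lengths contribute no point, since $\rho(\ell)\in\{0,\ell\}$. The remaining $N-(d_0-1)-(d_1-1)=q+1-d_0-d_1$ lengths split into pairs $\{\ell,q-\ell\}$. Each pair contributes exactly one point, so the count is $(q+1-d_0-d_1)/2$. When $\ell=q/2$ one would have $\ell=q-\ell$; but then $\rho(\ell)\in\{0,q/2\}=\{0,\ell\}$, so this length is exceptional and no unpaired nonexceptional length arises.

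For \eqref{eq:total-multiplicity}, each positive lattice point lies on a unique primitive ray and equals $mU$ for a unique primitive internal vector $U$ and integer $m\ge1$. The requirement $\len(mU)\le N$ means $1\le m\le H(U)$. Hence the count equals $\sum_U H(U)$, and multiplying by two gives $N=(d_0-1)+(d_1-1)+2\sum_U H(U)$.

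The only delicate step is the pairing: one must check that the exceptional sets are disjoint and closed under $\ell\mapsto q-\ell$, and that $\ell=q/2$ causes no trouble. All of this is the elementary divisibility recorded above.
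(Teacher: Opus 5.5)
Your proof is correct and follows the paper's argument: both count positive points by their length $\ell$, discard the $d_0-1$ and $d_1-1$ exceptional lengths, and pair $\ell$ with $q-\ell$ so that each remaining pair contributes exactly one point. The paper tracks $y_\ell\equiv -D\ell$ where you track $x\equiv(D+1)\ell$, which makes no difference, and your explicit checks of disjointness, closure of the exceptional lengths under the pairing, and the case $\ell=q/2$ supply details the paper leaves implicit.
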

\begin{proof}
For each length $1\le l<q$, the unique possible second coordinate is
\[
 y_l\equiv-Dl\pmod q,\qquad0\le y_l<q.
\]
An internal point occurs precisely when $0<y_l<l$. There are $d_0-1$ lengths with $y_l=0$ and $d_1-1$ with $y_l=l$, and the two classes are disjoint. On the other lengths, $l\mapsto q-l$ sends $y_l$ to $q-y_l$ and exchanges $y_l<l$ with $y_l>l$. Thus exactly half of the remaining $q+1-d_0-d_1$ lengths contribute an internal point. Grouping the points by their primitive rays gives~\eqref{eq:total-multiplicity}.
\end{proof}

\begin{proof}[Proof of Proposition~\ref{prop:profile}]
At $d=0$, all edges are horizontal except for the missing terminal edge, so $j(0)=H_x$. Similarly $j(N)=H_y$. There is a zero of $j$: either an endpoint is already zero, or the positive endpoint counts have horizontal and vertical classes, and Lemma~\ref{lem:cut-move} prevents a positive sequence of counts from carrying both classes.

On a positive excursion the primitive class $U$ is constant. The open component contains at least one vertex, so $j(d)\len(U)\le N$ and $j(d)\le H(U)$. Lemma~\ref{lem:local-crossing} excludes every positive local minimum. Since the counts change by one, an internal excursion rises to one peak and then descends. At each intervening zero the primitive slope strictly increases. Hence each primitive ray is visited at most once. The initial horizontal excursion decreases from its already maximal height $H_x$ to zero, and the final vertical excursion increases from zero to $H_y$.

Let $h(U)$ be the heights of the internal excursions actually visited. Their total duration gives
\[
 N=H_x+H_y+2\sum_{U\text{ visited}}h(U).
\]
Each $h(U)\le H(U)$, while~\eqref{eq:total-multiplicity} gives the same identity using every internal primitive ray at full height. Thus every ray occurs and every height is attained. Reversing $d=q-s=N-(s-1)$ gives~\eqref{eq:profile}. Its zeros and unit slopes give~\eqref{eq:profile-zeros}; the common primitive class gives the assertion on cycle lengths.
\end{proof}

\begin{samepage}
\section*{Disclosure of generative AI use}
Generative AI systems were used extensively in both the research and the writing of this paper. GPT-6 Pro developed the main proofs, the overall line of argument, and the underlying mathematical ideas, and drafted the manuscript. Claude Opus 5.5 contributed suggestions on the structure and flow of the paper, organization and editing of the text, and auxiliary verification, including independent finite checks of the stated results. The author directed the project, selected the final arguments, checked the mathematical reasoning and the computations, checked the cited literature to the extent used in the paper, and edited the final manuscript. The author assumes full responsibility for the correctness, originality, and integrity of the work. The AI systems are not authors and are not cited as sources of mathematical authority.
\end{samepage}

\enlargethispage{3\baselineskip}

\end{document}